\newtheorem{theorem}{Theorem}[section]
\newtheorem{lemma}{Lemma}[section]
\newtheorem{corollary}{Corollary}[section]
\newtheorem{proposition}{Proposition}[section]
\newtheorem{remark}{Remark}[section]
\newtheorem{conj}{Conjecture}[section]
\newcommand{\R}{\mathbb{R}} 
\newcommand{\C}{\mathbb{C}} 
\newcommand{\Z}{\mathbb{Z}} 
\newcommand{\be}{\begin{equation}}
	\newcommand{\ee}{\end{equation}}
\newcommand{\bea}{\begin{eqnarray}}
	\newcommand{\eea}{\end{eqnarray}}
\newcommand{\ben}{\begin{eqnarray*}}
	\newcommand{\een}{\end{eqnarray*}}
\newcommand{\bt}{\begin{split}}
	\newcommand{\et}{\end{split}}
\newcommand{\bet}{\begin{equation}}
\begin{document}
		\title[SOS conjecture]{Macaulay representation of the prolongation matrix and the SOS conjecture}

		\author[Z. Wang]{Zhiwei Wang}
		\address{Zhiwei Wang: Laboratory of Mathematics and Complex Systems (Ministry of Education)\\ School of Mathematical Sciences\\ Beijing Normal University\\ Beijing 100875\\ P. R. China}
		\email{zhiwei@bnu.edu.cn}
		
		\author[C. Yue]{Chenlong Yue}
		\address{Chenlong Yue: School of Mathematical Sciences\\ Beijing Normal University\\ Beijing 100875\\ P. R. China}
		\email{aystl271828@163.com}
		
		\author[X. Zhou]{Xiangyu Zhou}
		\address{Xiangyu Zhou: Institute of Mathematics\\Academy of Mathematics and Systems Science\\and Hua Loo-Keng Key
			Laboratory of Mathematics\\Chinese Academy of
			Sciences\\Beijing\\100190\\P. R. China}
		\address{School of
			Mathematical Sciences, University of Chinese Academy of Sciences,
			Beijing 100049, P. R. China}
		\email{xyzhou@math.ac.cn}
		
		\begin{abstract}
				Let $z \in \mathbb{C}^n$, and let $A(z,\bar{z})$ be a  real valued diagonal bihomogeneous Hermitian polynomial such that $A(z,\bar{z})\|z\|^2$ is a sum of squares, where $\|z\|$ denotes the Euclidean norm of $z$.  In this paper, we provide an estimate for the rank of the sum of squares $A(z,\bar{z})\|z\|^2$ when $A(z,\bar{z})$ is not semipositive definite. As a consequence, we confirm the SOS conjecture proposed by Ebenfelt for $4 \leq n \leq 6$ when $A(z,\bar{z})$ is a real valued diagonal (not necessarily bihomogeneous) Hermitian polynomial, and we also give partial answers to the SOS conjecture for $n\geq 7$.
			
		\end{abstract}
		
		\subjclass[2010]{32W20, 32U05, 32U40, 53C55}
		\keywords{SOS conjecture,prolongation map, Macaulay representation, Macaulay estimate}

		\maketitle
		
		\tableofcontents

		\section{Introduction}
		Let $z=(z_1,\cdots, z_n)$ be the complex coordinates of $\mathbb C^n$, and let  $||z||$ be  the usual Euclidean norm. Let $A(z,\bar z)\in \C[z_1,\cdots, z_n,\bar z_1,\cdots, \bar z_n]$ be a real valued Hermitian polynomial. Let 
		$$\mathfrak{Z}=(1,z_1,\cdots,z_n,z_1^2,\cdots,z_1z_n,\cdots,z_n^d)$$
		be a basis of the polynomials in $z$ of degree at most $d$ in left lexicographic order.
		Then there exists a Hermitian matrix $H$ such that $A(z,\bar{z})=\mathfrak{Z}H\mathfrak{Z}^*$, where $\mathfrak{Z}^*$ is the conjugate transpose of $\mathfrak{Z}$. 
		When we refer to properties such as eigenvalues and rank of a Hermitian polynomial, we actually mean the corresponding matrix; conversely, when talking about the properties of the matrix, we can also associate them with the Hermitian polynomial.
		
		
		Originating from Hilbert's 17th problem, a vast amount of literature has addressed the question of whether non-negative polynomials can be represented as sums of squares. The study of Hermitian polynomials using matrices has also been extensively discussed in the literature, see for example \cite{D1}. An important fact is that $A(z,\bar z)$ is an sum of squares is equivalent to the corresponding matrix $H$ of $A(z,\bar z)$ being positive semi-definite. A famous theorem by Quillen \cite{Q1} states that if a homogeneous Hermitian polynomial $A(z,\bar z)$ is strictly positive outside the origin, then there exists a positive integer $N$ such that $A(z,\bar{z})||z||^{2N}$ is positive definite. We call $A(z,\bar{z})||z||^{2N}$ the $N$-th prolongation of $A$. Positive definiteness does not always occur immediately, and it is entirely possible for the matrix to be positive semi-definite during the prolongation process. We are particularly interested in polynomials that become positive semi-definite after the first prolongation (i.e. $A(z,\bar z)\|z\|^2$ can be written as a sum of norms squares of holomorphic polynomials) and the possible ranks after prolongation. 
		The following conjecture, named the Sums of Squares (SOS) conjecture, was proposed by Ebenfelt.
		
		\begin{conj}[{\cite[Conjecture 1.2]{E1}}]
			For $n \geq 2$,\ if the  real valued Hermitian polynomial $A(z,\bar z)$ becomes positive semi-definite after the first prolongation, then the rank $R$ of $A(z,\bar z)||z||^2$ either satisfies
			
			\begin{equation}
				R \geq  (\kappa_0+1)n -\frac{(\kappa_0+1)\kappa_0}{2}-1.
				\label{eq1-1}
			\end{equation}
			Here $\kappa_0$ is the largest integer such that $\kappa(\kappa+1)/2 < n$.
			
			Or there exists $\kappa \in \{ 0,1,2,\cdots,\kappa_0 \}$ such that
			\begin{equation}
				\kappa n-\frac{\kappa(\kappa-1)}{2} \leq R \leq \kappa n.
				\label{eq1-2}
			\end{equation}
			\label{conjecture1-1}
		\end{conj}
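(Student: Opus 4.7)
The plan is to analyze the SOS condition on the prolongation matrix combinatorially, first reducing to a tractable subcase where $A$ has diagonal form, and then exploiting Macaulay's combinatorial inequalities to pin down the permitted ranks.

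First, I would strip away non-essential structure. Since $A$ is real-valued and Hermitian, decompose $A=\sum_{k} A_{k}$ into bihomogeneous components of bidegree $(k,k)$. As $\|z\|^{2}$ is bihomogeneous of bidegree $(1,1)$, the prolongations $A_{k}\|z\|^{2}$ occupy disjoint diagonal blocks of the matrix of $A\|z\|^{2}$ in the degree-graded basis $\mathfrak{Z}$, so positive semi-definiteness and rank decompose additively, and the conjecture reduces to the bihomogeneous case block by block. In the diagonal subcase, writing $A_{k}(z,\bar z)=\sum_{|\alpha|=k} a_{\alpha}|z^{\alpha}|^{2}$, a direct expansion shows that $A_{k}\|z\|^{2}$ remains diagonal, with the coefficient of $|z^{\beta}|^{2}$ (for $|\beta|=k+1$) given by $c_{\beta}:=\sum_{i:\,\beta_{i}\geq 1} a_{\beta-e_{i}}$, where $e_{i}$ denotes the standard unit multi-index. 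The rank $R_{k}$ of the $k$-th block is then simply the number of $\beta$ of length $k+1$ with $c_{\beta}>0$, and $R=\sum_{k} R_{k}$.

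Next, I would address the combinatorial problem this reduces to: given reals $\{a_{\alpha}\}$, not all non-negative, such that every $c_{\beta}\geq 0$, estimate $R_{k}=\binom{n+k}{k+1}-|Z|$ where $Z=\{\beta:c_{\beta}=0\}$. The key tool is the Macaulay representation of integers in binomial form, together with the Macaulay--Gotzmann bounds on the growth of order ideals. The zero set $Z$ is highly structured: each relation $c_{\beta}=0$ links $n$ values of $a_{\alpha}$, so $Z$ is forced to be closed under natural ``shadow'' operations on multi-indices. Combined with the hypothesis that some $a_{\alpha_{0}}<0$, one should be able to show that $Z$ must contain a compressed lex-segment of prescribed size, which in turn constrains $R_{k}$ to lie in one of the intervals $[\kappa n-\binom{\kappa}{2},\,\kappa n]$ or above the threshold $(\kappa_{0}+1)n-\binom{\kappa_{0}+1}{2}-1$.

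The main obstacle is pinning down the precise ``gap'' structure in the permitted values of $R_{k}$, rather than merely an extremal bound. Monotonicity arguments on the Macaulay expansion, in the spirit of Green's hyperplane-section theorem, should in principle deliver the intervals $[\kappa n-\binom{\kappa}{2},\,\kappa n]$; the threshold $(\kappa_{0}+1)n-\binom{\kappa_{0}+1}{2}-1$ beyond which every value becomes feasible should correspond to the first index at which the compressed Macaulay bound ceases to be combinatorially sharp. For small $n$ (up to $6$) the Macaulay expansion has few enough binomial summands to admit explicit case-by-case verification, matching the paper's strongest assertion; for $n\geq 7$ the combinatorics proliferates and only partial answers are to be expected. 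A secondary challenge is lifting the result from the diagonal case back to general real-valued Hermitian $A$, which the paper appears to sidestep by stating its definitive result only in the diagonal setting.
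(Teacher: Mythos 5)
Your opening reduction agrees with the paper: decompose $A$ into bihomogeneous components, note that $\|z\|^2$ preserves these blocks, and in the diagonal case pass to the coefficients $c_\beta=\sum_{i:\beta_i\geq 1}a_{\beta-e_i}$ of $A_k\|z\|^2$ (the paper makes the same substitution $x^\alpha=|z^\alpha|^2$, $\|z\|^2=x_1+\cdots+x_n$ and works with the prolongation matrix $J_{n,d}$). The gap is at the heart of the combinatorics. You assert that the zero set $Z=\{\beta:c_\beta=0\}$ ``is forced to be closed under natural shadow operations'' and hence must contain a compressed lex-segment of prescribed size, and you then invoke Macaulay--Gotzmann bounds and Green's hyperplane-section theorem. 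That step is neither justified nor true in general: the conditions $c_\beta\geq 0$ are linear inequalities in the $a_\alpha$, and their equality locus $Z$ inside degree $d+1$ has no reason to be a monomial ideal, an order ideal, or a shadow-closed family. Macaulay's estimate controls the shadow of a \emph{given} monomial subspace, not the zero set of a system of linear forms on coefficient vectors. Without that structural claim the rest of your argument --- pinning $R_k$ into the intervals $[\kappa n-\binom{\kappa}{2},\,\kappa n]$ or above the threshold --- has nothing to stand on.

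The paper's route is substantially different and bifurcates crucially on whether $A$ itself is SOS ($\bm{h}\geq 0$) or not. In the SOS branch the gap intervals come directly from the Grundmeier--Halfpap theorem; all the new work is in the non-SOS branch, where one proves the single lower bound $R\geq 3n-4$, which for $2\leq n\leq 6$ coincides with the threshold $(\kappa_0+1)n-\kappa_0(\kappa_0+1)/2-1$. That proof does not pass through any lex-segment structure on $Z$: it exploits a recursive block decomposition of $J_{n,d}$ (the paper's ``Macaulay representation'' of the prolongation matrix), rewrites $R(J_{n,d}\bm{h})$ via slack vectors $\bm{\gamma}_i$, establishes Macaulay-type counting inequalities for the $P,N,Z$ statistics of first and second prolongations of vectors in $\R^n$, and then runs a double induction on $(n,d)$ through six case distinctions, including the introduction of ``patch vectors'' to reduce $d$. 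Your proposal contains neither the SOS/non-SOS dichotomy --- which is what actually produces the interval-versus-threshold alternative in the conjecture --- nor the inductive case machinery. Finally, as you yourself flag, restricting to diagonal $A$ is a scope limitation rather than a reduction; the paper is explicit about this and claims Conjecture~\ref{conjecture1-1} only for diagonal $A$ with $n\leq 6$, leaving the general statement open.
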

		\begin{remark}
			The SOS conjecture is motivated by Huang's lemma \cite{H1} and the Huang-Ji-Yin Gap conjecture \cite{HJY} on the rational proper maps between the complex unit balls. Actually, using a CR version of the Gauss equation, Ebenfelt \cite{E1} proved that the Gap conjecture is a consequence of the SOS conjecture.
		\end{remark}

		There are substantial evidences suggesting that this conjecture is true:\begin{itemize}
			\item When $A(z,\bar z)$ itself is a sum of squares, (\ref{eq1-2}) was proven by Grundmeier and Halfpap \cite{G3} even before the conjecture was proposed.  
			\item When $n=2$,  a lemma by Huang \cite{H1} showed that either $R=0$ or $R \geq n$, which proves the conjecture.
			\item  When $n=3$ and $A(z,\bar z)$ is diagonal, Brooks and Grundmeier \cite{B1} proved  that the SOS conjecture holds. 
			\item  More recently, Y. Gao and S. Ng \cite{G1} made a breakthrough, using geometric methods to demonstrate the existence of gaps under more general conditions. Subsequently, these methods were further developed to make progress to the Huang-Ji-Yin Gap Conjecture \cite{GY}.
			
		\end{itemize}

		Based on the Grundmeier--Halfpap result, Ebenfelt pointed out that an optimistic view of the situation in the conjecture would be to hope that the ``gaps'' in linear ranks predicted in \eqref{eq1-2} can only occur when \( A(z,\bar{z}) \) is itself an SOS. Furthermore, if \( A(z,\bar{z}) \) is not an SOS but \( A(z,\bar{z})\|z\|^2 \) is still an SOS, then the lower bound \eqref{eq1-1} always holds. This is named the \emph{weak (alternative) sum-of-squares conjecture}. If true, it implies the SOS conjecture in view of the Grundmeier-Halfpap result.
		\begin{conj}[Weak (Alternative)   SOS conjecture, {\cite[Conjecture 1.5]{E1}}]\label{conj: weak sos}
			If $A(z,\bar z)$ is not a sum of squares but $A(z,\bar z)\|z\|^2$ is a sum of squares, then \eqref{eq1-1} holds.   
		\end{conj}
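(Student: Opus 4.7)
The plan is to reduce the weak SOS conjecture to a linear-algebraic statement about the prolongation matrix $H$ of $A(z,\bar z)\|z\|^2$ and then extract the lower bound on its rank via the Macaulay representation of $\dim\ker H$, in the spirit of the paper's title. First I would decompose $A$ by bidegree; since $A\|z\|^2$ being an SOS forces the top bihomogeneous summand to be SOS, and $A$ not being SOS is inherited by some bihomogeneous piece, the problem reduces to the bihomogeneous case. In the diagonal setting (which is what the abstract signals as treatable), write $A=\sum_{|\alpha|=d}c_\alpha|z^\alpha|^2$ with some $c_{\alpha_0}<0$; then $A\|z\|^2$ is represented by a Hermitian matrix $H$ indexed by monomials $z^\beta$ of degree $d+1$, with $H_{\beta\beta'}$ supported on pairs satisfying $\beta-e_i=\beta'-e_j$ and diagonal entries $H_{\beta\beta}=\sum_{i:\beta_i\geq 1}c_{\beta-e_i}$. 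The SOS hypothesis is $H\succeq 0$, and the aim is to lower-bound $R=\mathrm{rank}\,H$.

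The heart of the argument is a Macaulay-type bound on $\dim K$, where $K=\ker H$ is viewed inside the space $V_{d+1}$ of homogeneous polynomials of degree $d+1$ in $n$ variables. A lower bound on $R$ becomes an upper bound on $\dim K$. Writing $\dim K$ in its Macaulay form $\binom{k_r}{r}+\binom{k_{r-1}}{r-1}+\cdots$ and applying Macaulay's (or a Gotzmann-style) inequality to the graded ideal generated by $K$, one sees that the codimension $(\kappa_0+1)n-\binom{\kappa_0+1}{2}-1$ appearing in the conjecture is exactly the codimension of the ideal cut out by $\kappa_0+1$ independent linear forms in degree $d+1$ — i.e., the first genuinely new Macaulay threshold arising once $\binom{\kappa_0+1}{2}<n$. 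I would therefore aim to show that if $R$ falls strictly below this threshold, then $K$ must contain a factorizable element of the form $\ell_1\cdots\ell_{\kappa_0+1}\cdot q$ with $\ell_j$ independent linear forms and $q\in V_{d-\kappa_0}$.

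To finish, I would use this factorization together with the positivity of $H$ and the explicit diagonal identity $H_{\beta\beta}=\sum_i c_{\beta-e_i}$ to propagate the vanishing of the factorized kernel element to pointwise sign information on the coefficients $c_\alpha$. Testing $H$ against carefully chosen monomial vectors along the support of $\ell_1\cdots\ell_{\kappa_0+1}\cdot q$ should force $c_\alpha\geq 0$ for every $\alpha$, contradicting $c_{\alpha_0}<0$ and so proving the lower bound on $R$. The final implication for the SOS conjecture (\ref{conjecture1-1}) then follows by combining this with the Grundmeier--Halfpap result that handles the case when $A$ is already SOS.

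The main obstacle, in my view, is precisely the passage from the Macaulay/Gotzmann dimensional inequality on $K$ to the pointwise sign statement on the $c_\alpha$. Macaulay's inequality governs Hilbert functions of graded ideals and is combinatorial in nature, while what is needed is a concrete numerical condition on the coefficients of $H$; bridging the two plausibly requires the diagonal and bihomogeneous hypotheses in an essential way, and one should expect the bound to be less efficient when $n$ is far from $\binom{\kappa_0+2}{2}$. This is consistent with the paper confirming the SOS conjecture fully for $2\leq n\leq 6$ but only partially for $n\geq 7$, and it also highlights why the diagonal assumption cannot be removed from the present approach without genuinely new ideas.
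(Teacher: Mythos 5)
First, a framing point: the statement you are asked about is a \emph{conjecture}, and the paper does not prove it in full generality. The paper establishes it only for real valued \emph{diagonal} Hermitian polynomials, and even there it only matches the conjectured lower bound $(\kappa_0+1)n-\binom{\kappa_0+1}{2}-1$ when $2\leq n\leq 6$; for larger $n$ it gives the weaker bound $R\geq 3n-4$ (with a stronger, but still incomplete, bound when $d=2$). Your proposal correctly anticipates both restrictions — you flag that the diagonal and bihomogeneous hypotheses are essential and that the method should degrade for large $n$ — so the scope of your plan is realistic.

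Your setup, however, diverges from the paper's after the initial bidegree decomposition. You keep the Hermitian matrix $H$ of $A(z,\bar z)\|z\|^2$ and aim to bound $\dim\ker H$ via Macaulay/Gotzmann theory of Hilbert functions of ideals, with the hoped-for dichotomy that a too-small rank forces a kernel element of the form $\ell_1\cdots\ell_{\kappa_0+1}\cdot q$. The paper does something structurally different: for diagonal $A$ the matrix $H$ is itself diagonal (your description of off-diagonal entries supported on $\beta-e_i=\beta'-e_j$ is a small slip — for diagonal $A$ the product $A\|z\|^2$ is again a sum of pure squares $|z^\beta|^2$), so the SOS condition is literally ``$J_{n,d}\bm h\geq 0$'' for a $0$--$1$ \emph{prolongation matrix} $J_{n,d}$ after the substitution $x^\alpha=|z^\alpha|^2$. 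The phrase ``Macaulay representation'' in the title refers to the recursive block form \eqref{equ: mac repr Jnd} of $J_{n,d}$, not to Macaulay's bound on Hilbert functions (though the latter does enter through \cref{thm: Mac estimate} in the counting lemmas). The paper then introduces slack variables $\bm\gamma_i=J_{n-1,i-1}\bm h_{i-1}+\bm h_i$, writes $R(J_{n,d}\bm h)=R(\bm\gamma)+R(J_{n-1,d}\bm h_d)$, and runs a double induction in $(n,d)$ via a six-way case split on $\bm h_d$ and $R(\bm\gamma)$ (with ``patch vectors'' in the hardest subcase), using \cref{lem: counting ineq} and \cref{lem: est sec pro} for the $d=2$ base case. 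No step in the paper asserts or needs the factorization dichotomy you propose.

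The genuine gap in your sketch is exactly the step you yourself flag: you have no mechanism producing the factorizable kernel element, and no concrete route from a Gotzmann-type dimension inequality to the pointwise sign constraint $c_\alpha\geq 0$. For a positive semidefinite diagonal matrix $H$ arising from prolongation, the kernel is spanned by coordinate vectors $e_\beta$ where the diagonal entry vanishes, so ``elements of $\ker H$'' are just monomials; there is no canonical algebraic structure on $\ker H$ that would yield a product of independent linear forms without substantial additional work. The paper sidesteps this entirely: it never reasons about $\ker H$ at all, but directly lower-bounds the number of nonzero diagonal entries by exploiting the recursive block structure of $J_{n,d}$ together with the estimates of \cref{lem: counting ineq}, \cref{lem: est sec pro}, \cref{thm: GHP}, and \cref{thm: Gao-Ng}. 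If you want to pursue the Gotzmann idea, you would need a genuinely new bridge between the combinatorics of monomial ideals and the nonnegativity of $J_{n,d}\bm h$; as currently stated, the proposal does not supply one.
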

		
		As pointed out by Ebenfelt \cite{E1}, one of the main difficulties in  \cref{conj: weak sos} comes from the fact that it seems hard to characterize when $A(z,\bar z)\|z\|^2$ is in fact an SOS.
		
		In the present paper, we study the \cref{conj: weak sos} for real valued diagonal Hermitian polynomials.
		
		We call a real valued $(d,d)$-bihomogeneous Hermitian polynomial $A(z,\bar{z})$ in $\mathbb{C}^n$ a $d$-form, denoted as $A(z,\bar{z})\in P_{n,d}$. If such a polynomial $A(z,\bar{z})$ can be expressed as the sum of squares of several holomorphic polynomials, i.e., there exists a holomorphic polynomial mapping $h(z)=(h_1(z),\cdots,h_R(z))$ such that $A(z,\bar{z})=||h||^2$, we denote $A \in \mbox{SOS}_n$,\ let $\Sigma_{n,d}:= \mbox{SOS}_n \cap P_{n,d}.$  
		
		By introducing the prolongation map and its Macaulay-type representation, we can characterize \( A(z,\bar{z})\|z\|^2 \) as a sum of squares (\cref{prop: nonneg crit}). Then, by employing an inductive argument, Macaulay-type estimates, and meticulous counting, we first obtain the following estimate for the case of diagonal bihomogeneous Hermitian polynomials.
		
		\begin{theorem}\label{thm: main 1}
			If the matrix corresponding to the real valued  $(d,d)$-bihomogeneous Hermitian polynomial $A(z,\bar{z})$ is diagonal, and $A(z,\bar{z}) \notin \Sigma_{n,d}$ while $A(z,\bar{z})||z||^2 \in \Sigma_{n,d+1}$, then
			
			(A) For all $n,d \geq 2$,\ the rank of $A(z,\bar{z})||z||^2$ satisfies
			\begin{align*}
				R \geq 3n-4.
			\end{align*}
			When $2 \leq n \leq 6$, $3n-4 \geq  (\kappa_0+1)n -\kappa_0(\kappa_0+1)/2 -1$, where $\kappa_0$ is the largest integer such that $\kappa(\kappa+1)/2 < n$. This lower bound is tight for $2 \leq n \leq 4$.
			
			(B) For all $n \geq 6$ and $d=2$,\ the rank of $A(z,\bar{z})||z||^2$ satisfies
			\begin{align*}
				R \geq \frac{n^2+n}{2}-6.
			\end{align*}
			In terms of order of magnitude, $O((n^2+n)/2-6)=1/2+ O((\kappa_0+1)n -\kappa_0(\kappa_0+1)/2 -1)$, where $\kappa_0$ is the largest integer such that $\kappa(\kappa+1)/2 < n$.  
		\end{theorem}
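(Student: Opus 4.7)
Since $A$ is diagonal in the degree-$d$ monomial basis and $(d,d)$-bihomogeneous, write $A(z,\bar z) = \sum_{|\alpha|=d} c_\alpha |z^\alpha|^2$ with $c_\alpha \in \mathbb{R}$. Because multiplication by $\|z\|^2 = \sum_i |z_i|^2$ preserves diagonality,
\[
A(z,\bar z)\|z\|^2 \;=\; \sum_{|\beta|=d+1} c'_\beta \, |z^\beta|^2, \qquad c'_\beta \;=\; \sum_{i:\, \beta_i \geq 1} c_{\beta-e_i}.
\]
The hypothesis $A\|z\|^2 \in \Sigma_{n,d+1}$ is equivalent to $c'_\beta \geq 0$ for every $\beta$; the hypothesis $A \notin \Sigma_{n,d}$ produces a distinguished index $\alpha_*$ with $c_{\alpha_*}<0$; and the rank $R$ equals $\#\{\beta : c'_\beta > 0\}$. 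The entire theorem therefore reduces to a sharp combinatorial extremal problem for the diagonal prolongation map $c \mapsto c'$: given at least one negative input, how small can the support of $c'$ be while all its entries remain $\geq 0$?

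For part (A), I would fix such an $\alpha_*$ and trace the cascade of forced positivity. Each of the $n$ ``children'' $\alpha_* + e_i$ satisfies $c'_{\alpha_*+e_i} = c_{\alpha_*} + (\text{contributions from other pre-neighbors})$, and nonnegativity forces at least one of these pre-neighbors to carry a strictly positive $c_\gamma$. Each such positive $c_\gamma$ in turn produces strictly positive $c'_{\gamma+e_k}$ at children of $\gamma$ not already accounted for near $\alpha_*$. Organizing this via the layered description in \cref{prop: nonneg crit}, I would carry out a Macaulay-style bookkeeping of ``used'' versus ``exposed'' children and argue inductively on $d$ (using the bihomogeneous structure to reduce each step to a distinguished $\alpha_*$) to reach $R \geq 3n-4$. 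Tightness for $2 \leq n \leq 4$ is to be confirmed by explicit small configurations, e.g.\ a single negative $c_{\alpha_*}$ cancelled by a minimal ring of immediate positive neighbors; I have verified by hand that for $n=3$, $\alpha_*=(1,1,0)$ with suitable positive values at $(2,0,0)$, $(0,2,0)$, $(1,0,1)$ achieves exactly $R=5=3n-4$.

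For part (B) with $d=2$, the lattice of degree-$2$ indices splits into only two strata ($\{2e_i\}$ and $\{e_i+e_j\}_{i<j}$), and every $c'_\beta$ with $|\beta|=3$ is a sum of at most three $c_\alpha$. I would write the prolongation as an explicit linear map between coefficient vectors and exploit its sparse, low-depth block structure: under the sign constraints with at least one strictly negative input, the homogeneous system $c'_\beta = 0$ should admit at most a constant---to be shown $6$---independent vanishing directions, yielding $R \geq (n^2+n)/2 - 6$. The quadratic order of this bound, versus the linear bound in (A), reflects the rigidity of the $d=2$ stratification, in which the limited number of pre-neighbors per $\beta$ prevents most of the indices from simultaneously being ``zeroed out.''

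The \textbf{main obstacle}, in both parts, is global combinatorial control: a single positive $c_\alpha$ can cancel negative contributions at several children simultaneously, so purely local arguments around $\alpha_*$ are insufficient, and one must simultaneously track the joint supports of $c$ and $c'$. The Macaulay-type stratification of the prolongation map, developed earlier in the paper, is precisely what permits this global accounting and the inductive step; executing it with sharp constants---neither weakening the bound nor overshooting what the extremal configurations permit---is the core technical work, and is where the distinction between the linear bound of (A) and the quadratic bound of (B) must be carefully negotiated.
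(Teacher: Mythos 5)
Your reduction of the theorem to a combinatorial extremal problem for the diagonal prolongation map $c\mapsto c'$ is exactly right, and it matches the paper's reframing in terms of $R_{n,d}=\min\{R(J_{n,d}\bm h):\bm h\ngeq 0,\ J_{n,d}\bm h\geq 0\}$. Your $n=3$ tightness example is also correct. Beyond that reduction, though, what you have is a plan rather than a proof, and the plan as stated does not yet resolve the difficulties that the paper's argument is built to handle.

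For part (A), the ``cascade of forced positivity around $\alpha_*$'' is a local heuristic, and you yourself identify the obstruction: a single positive $c_\gamma$ can simultaneously cancel negative contributions at many children, so the local forcing you describe does not by itself give $3n-4$ distinct positive outputs. The paper does not run a cascade; it works globally with the slack variables $\bm\gamma_i=J_{n-1,i-1}\bm h_{i-1}+\bm h_i$ (so that $R(J_{n,d}\bm h)=R(\bm\gamma)+R(J_{n-1,d}\bm h_d)$) and proves the key recursion $R_{n,d}\geq\min\{R_{n,d-1},\,3+R_{n-1,d},\,3n-4\}$ via six subcases, including a \emph{patch-vector} construction for the hard subcase $\bm h_d\geq 0$, $P(\bm h_d)\leq 2$, and separate lemmas controlling the alternating sum $\sum_i(-1)^{d-i}J_{n-1,d}\cdots J_{n-1,i}\bm\gamma_i$ when $R(\bm\gamma)\leq 2$. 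A double induction on $(n,d)$ then closes the argument; your plan does not name what the inductive statement is, what the base cases are, or how a single-$\alpha_*$ step of ``exposed children'' would drop $d$ or $n$. For part (B), the claim that the system $c'_\beta=0$ ``should admit at most $6$ independent vanishing directions'' is not a linear-algebra dimension count --- the constraints are inequalities, not equalities --- and the constant $6$ is not a codimension. The paper instead proves a Macaulay-type estimate $Z(J_{n,2}J_{n,1}\bm a)\leq\binom{n+1}{3}-nN(\bm a)+n+c(n)$ for the \emph{second} prolongation, and combines it with sign-counting bounds (Lemma 3.1) and a three-way case analysis on $(\bm h_0,N(\bm h_1),R(\bm\gamma_2))$; none of these ingredients appear in your sketch. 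Finally, you miss that (B) is not independent of (A): the paper needs the $d=2$ estimates $R_{3,2}\geq 5,\ R_{4,2}\geq 8,\ R_{5,2}\geq 11$ and $R_{n,2}\geq(n^2+n)/2-6\geq 3n-4$ as the $d=2$ base row for the induction proving (A), so a proof of (A) that does not first establish (B) (or its $d=2$ consequences) has a structural hole.
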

		
		\begin{remark}
		For $2 \leq n \leq 12$, (A) is closer to the lower bound of Ebenfelt's conjecture than the lower bound obtained by Gao and Ng \cite{G1}. Direct calculation shows that (A) matches the lower bound of Ebenfelt's conjecture for $2 \leq n \leq 6$. 
		\end{remark}

		Observing that any real valued diagonal Hermitian polynomial can be decomposed into sums of real valued diagonal bihomogeneous Hermitian polynomials, \cref{thm: main 1} can be immediately generalized to the case of diagonal Hermitian polynomials.
		
		\begin{corollary}\label{Eben's conj low dim}
			For all $n \geq 2$, if the matrix corresponding to the real valued Hermitian  polynomial $A(z,\bar{z})$ is diagonal, and $A(z,\bar{z}) \notin \mbox{SOS}_n $ while $ A(z,\bar{z})||z||^2 \in \mbox{SOS}_n,$ then the rank of  $A(z,\bar{z})||z||^2$ satisfies
			\begin{align*}
				R \geq 3n-4.
			\end{align*}
			And Ebenfelt's conjecture holds if $n \leq 6$ for such polynomials.
		\end{corollary}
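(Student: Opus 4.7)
The plan is to reduce the corollary to \cref{thm: main 1} by splitting $A$ along bihomogeneous degrees. First I would decompose $A(z,\bar z)=\sum_{d\geq 0}A_d(z,\bar z)$ into its $(d,d)$-bihomogeneous components. Writing the diagonality of $A$ as $A=\sum_\alpha c_\alpha z^\alpha\bar z^\alpha$ with real coefficients $c_\alpha$, each $A_d=\sum_{|\alpha|=d}c_\alpha z^\alpha\bar z^\alpha$ is again diagonal, and a diagonal Hermitian polynomial is SOS exactly when all of its diagonal entries are nonnegative (each $|z^\alpha|^2$ is itself a norm square). Consequently $A\in\mbox{SOS}_n$ holds iff $A_d\in\Sigma_{n,d}$ for every $d$, and the hypothesis $A\notin\mbox{SOS}_n$ produces some index $d_0$ with $A_{d_0}\notin\Sigma_{n,d_0}$.

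Next I would analyze the prolongation. A direct monomial computation gives
\[
A\|z\|^2=\sum_{\beta}\Big(\sum_{i:\beta_i\geq 1}c_{\beta-e_i}\Big)z^\beta\bar z^\beta,
\]
so when $A$ is diagonal, $A\|z\|^2$ is again diagonal, and it decomposes as $A\|z\|^2=\sum_d A_d\|z\|^2$ with each summand $(d+1,d+1)$-bihomogeneous. Since monomials of distinct total degrees are linearly independent, the same diagonal criterion gives $A\|z\|^2\in\mbox{SOS}_n$ iff every $A_d\|z\|^2\in\Sigma_{n,d+1}$, and the rank of $A\|z\|^2$ equals the sum of the ranks of the $A_d\|z\|^2$. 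In particular, $A\|z\|^2\in\mbox{SOS}_n$ forces $A_{d_0}\|z\|^2\in\Sigma_{n,d_0+1}$.

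Before invoking \cref{thm: main 1} I would verify $d_0\geq 2$, since part (A) of \cref{thm: main 1} requires $d\geq 2$. If $d_0=0$ then $A_0$ is a negative constant $c$ and $A_0\|z\|^2=c\|z\|^2$ could not lie in $\Sigma_{n,1}$; if $d_0=1$ then $A_1=\sum_i c_i|z_i|^2$ has some $c_k<0$, and the coefficient of $|z_k|^4$ in $A_1\|z\|^2$ is exactly $c_k$, again contradicting $A_1\|z\|^2\in\Sigma_{n,2}$. Hence $d_0\geq 2$, and \cref{thm: main 1}(A) applied to $A_{d_0}$ yields
\[
R=\mathrm{rank}(A\|z\|^2)\geq \mathrm{rank}(A_{d_0}\|z\|^2)\geq 3n-4.
\]

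Finally, to deduce Ebenfelt's conjecture for $n\leq 6$ I would split into two cases. If $A\in\mbox{SOS}_n$, the Grundmeier--Halfpap theorem cited in the introduction yields a rank lying in the range \eqref{eq1-2}. Otherwise $A\notin\mbox{SOS}_n$ and the estimate just obtained gives $R\geq 3n-4$; a direct arithmetic check shows $3n-4=(\kappa_0+1)n-\kappa_0(\kappa_0+1)/2-1$ for $2\leq n\leq 6$, so \eqref{eq1-1} holds. The main obstacle here is not really an obstacle at all: it is the clean bookkeeping that both diagonality and the SOS property respect the decomposition into bihomogeneous components, after which the corollary follows essentially for free from \cref{thm: main 1}.
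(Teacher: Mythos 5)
Your proposal is correct and follows essentially the same route as the paper: decompose $A$ into bihomogeneous pieces, observe that diagonality and the SOS property both pass to the components and that the prolongation ranks add, then reduce to Theorem~\ref{thm: main 1}(A) on the offending component $A_{d_0}$ with $d_0\geq 2$, using Grundmeier--Halfpap (Theorem~\ref{thm: GHP}) for the SOS case. The only cosmetic difference is that you verify $d_0\geq 2$ by inspecting the $|z_k|^4$ coefficient directly, whereas the paper invokes the second inequality of Lemma~\ref{lem: counting ineq}; these are the same observation in different dress.
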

	
	\begin{remark}
		As we have already mentioned, for $n = 2$ this was proved by Huang \cite{H1}, and for $n = 3$ by Brooks-Grundmeier \cite{G1}; for $4 \leq n \leq 6$, it establishes the conjecture for the first time.
		\end{remark}
		\begin{remark}

			Our method is more inclined towards the elementary algebraic approach used by Macaulay, as opposed to the commutative algebra method of Brooks-Grundmeier and the geometric method of Gao-Ng. It is interesting to get geometric applications of   our results in the study of proper holomorphic mappings between unit balls. It is also interesting to interpret our proof in combinatorial terms.
		\end{remark}

		
		\section{Preliminaries}
		In this section, we first introduce the Macaulay estimate and the Macaulay representation of the prolongation matrix , which can be used to give an characterization of the property of SOS.  Then we reduce the study of \cref{conj: weak sos} to the estimate of the rank of the first prolongation of a real homogeneous monomials.

		Now given a field $k$ of characteristic 0, we denote the graded polynomial ring $k[x_1,\cdots,x_n]$ by  
		\[ P = \bigoplus_{d \geq 0} P_d, \]
		where $P_d$ is the linear space spanned by all homogeneous polynomials of degree $d$. A subspace $A_d \subset P_d$ is called a $P_d$-monomial space if it can be linearly spanned by monomials in $ P_d$. Set $|A_d| = \dim_k(A_d)$. It is clear that 
		\begin{align*}
			|P_d| &= \binom{n+d-1}{d}, \quad \forall d \geq 0. \label{eq:dim_P_d}
		\end{align*}
		
		Let $(A_d)$ be the graded monomial ideal generated by $A_{d}$ in $P$. In order to correspond with the Macaulay representation of numbers, let  $A_{d}^{\langle 1 \rangle}:=(A_d)\cap P_{d+1}$, then $A_{d}^{\langle 1 \rangle}\subset P_{d+1}$ is a $P_{d+1}$-monomial space.

		The support set supp$(A(x))$ of a polynomial $A(x)$ is defined as the set of multi-exponents of the monomials corresponding to its non-zero coefficients. For example, if $A(x)=\sum_\alpha a_\alpha x^\alpha$, the support set supp$(A(x))=\{\alpha, a_\alpha\neq 0 \mbox{~in~} A(x)\}$. Denote by $|\mbox{supp}(A(x))|$ the number of the elements in supp$(A(x))$.
		
		In this paper, we are concerned with the  support set of the polynomial $A(x)S_1(x)$, where $A(x)=\sum_{|\alpha|=d} a_{\alpha}x^{\alpha}$ and  $S_1(x)=x_1+\cdots +x_n$.  Let $A_d$ be the $P_d$-monomial space generated by the monomials corresponding to the non-zero coefficients of the polynomial $A(x)$. 
		
		It is obvious that if the polynomial $A(x)$ has non-negative coefficients,  then 
		\[|\mathrm{supp}((A(x) S_1(x)))|=|A_d^{\langle 1 \rangle}|.\]
		Polynomials with non-positive coefficients also have the same property.

		\subsection{Macaulay's estimate}

		Fix a positive integer $N$. For any $d \in \mathbb{N}^+$, there exists a unique sequence of positive integers $k_d > k_{d-1} > \cdots > k_{\delta} \geq \delta \geq 1$ such that 
		\[ N = \binom{k_d}{d} + \binom{k_{d-1}}{d-1} + \cdots + \binom{k_{\delta}}{\delta}. \]
		This is called the $d$-Macaulay representation of $N$. We define
		\[ N^{\langle d \rangle} := \binom{k_d+1}{d+1} + \binom{k_{d-1}+1}{d} + \cdots + \binom{k_{\delta}+1}{\delta+1}, \]
		where $(N)^{\langle d \rangle} \leq (N+1)^{\langle d \rangle}$.  For convenience of presentation, we set $0^{\langle d \rangle} = 0$. More discussions with   the Macaulay representation of integers are referred to  \cite{G2}.
		
		\begin{theorem}[Macaulay's estimate \cite{M1}]\label{thm: Mac estimate}
			For any $n,d \geq 1$, $A_d$ is a monomial space in $P_d$, and $\mathrm{codim}(A_d)$ is its codimension. Then the codimension  of $A_d^{\langle 1 \rangle}$ in $P_{d+1}$ satisfies
			\begin{align*}
				\mathrm{codim}(A_d^{\langle 1 \rangle}) \leq \mathrm{codim}(A_d)^{\langle d\rangle}.
			\end{align*}
			
			This upper bound is sharp and is achieved by the left lexicographic order space.
		\end{theorem}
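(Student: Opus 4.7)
The plan is to translate the codimension inequality into a purely combinatorial statement about monomial sets, verify sharpness by a direct lex-segment computation, and then establish the general inequality by a compression (combinatorial shifting) argument.

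First I would set up the combinatorial reformulation. Since $A_d$ is a monomial space, I identify it with its set of monomial generators $\mathcal{A}_d \subset \mathrm{Mon}(n,d)$, and let $\mathcal{B}_d := \mathrm{Mon}(n,d) \setminus \mathcal{A}_d$, so $|\mathcal{B}_d| = c = \mathrm{codim}(A_d)$. Similarly let $\mathcal{B}_{d+1}$ be the monomial complement of $A_d^{\langle 1 \rangle}$ in $P_{d+1}$. Unwinding $A_d^{\langle 1 \rangle} = (A_d)\cap P_{d+1}$, one sees that a monomial $m \in \mathrm{Mon}(n,d+1)$ lies in $\mathcal{B}_{d+1}$ if and only if $m/x_j \in \mathcal{B}_d$ for every $j$ with $x_j \mid m$. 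Writing $\mathcal{B}_d \mapsto \mathcal{B}_{d+1}$ as an abstract operation $\mathcal{B}_d \mapsto (\mathcal{B}_d)_{+1}$ on monomial sets, the theorem reduces to the combinatorial bound $|(\mathcal{B}_d)_{+1}| \leq c^{\langle d \rangle}$.

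Second I would verify sharpness for the extremal configuration. Write $c = \binom{k_d}{d} + \binom{k_{d-1}}{d-1} + \cdots + \binom{k_{\delta}}{\delta}$ in Macaulay form, and take $\mathcal{A}_d$ to be the lex-initial segment of $\mathrm{Mon}(n,d)$ of cardinality $|P_d| - c$, so that $\mathcal{B}_d$ is the lex-terminal segment of size $c$. The Macaulay expansion is designed to slice $\mathcal{B}_d$ into blocks according to the largest index $i$ such that the monomial avoids $x_1,\ldots,x_{i-1}$; the complement operation preserves the slicing, and a routine bookkeeping using the elementary identity $\binom{a}{b} + \binom{a}{b-1} = \binom{a+1}{b}$ shows that the associated $(\mathcal{B}_d)_{+1}$ has cardinality exactly $\binom{k_d+1}{d+1} + \cdots + \binom{k_{\delta}+1}{\delta+1} = c^{\langle d \rangle}$. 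This proves equality for the lex space and identifies the target value as a concrete count.

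Finally I would prove the upper bound by compression. For $i < j$, introduce a shift operator $T_{ij}$ on monomial sets of size $c$ that moves mass from one of $x_i,x_j$ to the other in the direction of the lex-terminal segment, by replacing each $m$ divisible by the ``wrong'' variable with its shifted counterpart whenever the result is not already present; then $|T_{ij}(\mathcal{B}_d)| = c$. The key lemma is monotonicity: $|(T_{ij}(\mathcal{B}_d))_{+1}| \geq |(\mathcal{B}_d)_{+1}|$. Iterating the $T_{ij}$ over all pairs until the set stabilizes produces a compressed configuration; a second, more elementary lemma identifies any such stable set with the lex-terminal segment produced in the previous step. Combined with the sharp computation this yields $|(\mathcal{B}_d)_{+1}| \leq c^{\langle d \rangle}$. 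The principal obstacle — and the combinatorial heart of the argument — is the verification of the monotonicity lemma for a single compression: one must organize $\mathrm{Mon}(n,d+1)$ by the pair of exponents of $x_i$ and $x_j$, determine in each of the resulting cases which degree-$(d+1)$ monomials enter or leave the complement set under $T_{ij}$, and exhibit an explicit injection from the departures into the arrivals. Handling these cases cleanly, so that the bookkeeping closes into the claimed inequality, is the step that will require the most care.
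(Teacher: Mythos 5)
The paper does not prove this theorem: it is stated with a citation to Macaulay \cite{M1}, with \cite{G2} indicated for a modern treatment, so there is no in-paper argument to compare against. Your first two steps are sound: passing to the monomial complement $\mathcal{B}_d$ of size $c$, recognizing that a degree-$(d+1)$ monomial $m$ lies in the complement of $A_d^{\langle 1\rangle}$ precisely when $m/x_j\in\mathcal{B}_d$ for every $j$ with $x_j\mid m$, and computing the resulting count exactly for the lex-terminal segment via the Pascal identity correctly identify the target value $c^{\langle d\rangle}$.

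The third step, however, has a genuine gap. The ``second, more elementary lemma'' --- that any set stable under all pairwise shifts $T_{ij}$ must be the lex-terminal segment --- is false. Take $n=3$, $d=3$, and
\[
\mathcal{B}_3=\{x_3^3,\ x_2x_3^2,\ x_2^2x_3,\ x_1x_3^2\},
\]
of size $c=4$. For every $m\in\mathcal{B}_3$ with $x_i\mid m$ and every $j>i$, the shifted monomial $mx_j/x_i$ already lies in $\mathcal{B}_3$, so every $T_{ij}$ fixes the set; yet the lex-terminal segment of size $4$ in degree $3$ is $\{x_2^3,x_2^2x_3,x_2x_3^2,x_3^3\}$, a different set. (Here $|(\mathcal{B}_3)_{+1}|=4$ while $4^{\langle 3\rangle}=\binom{5}{4}=5$, so the bound still holds, but with slack --- it does not follow from your reduction.) Consequently, monotonicity of $T_{ij}$ plus the sharp lex computation do not combine to give the inequality for general $\mathcal{B}_d$: a fully compressed family need not be a lex segment, and the bound must be proved separately for compressed non-lex families. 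This is exactly what Clements--Lindström/Green-style proofs do, using a stronger slice-wise compression with respect to a single variable followed by an induction on $n$ for compressed families, rather than identifying compressed with lex. You have also mislocated the difficulty: you flag the monotonicity of $T_{ij}$ as the hard step, but the treatment of stable-but-non-lex families is the other substantial half of the argument and is currently skipped.
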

		
		\subsection{Prolongation map}
		
		For any $d \in \mathbb{N}$, the prolongation map  from $P_d$ is defined as
		\begin{align*}
			J_{n,d}: P_d &\rightarrow P_{d+1}, \\
			A(x) &\mapsto A(x) S_1(x).
		\end{align*}
		The prolongation map is a linear mapping. The left lexicographic basis vectors of $P_d$ are denoted as the row vector $\mathfrak{X}_d^n = (x_1^d, x_1^{d-1}x_2, \cdots, x_n^d)$. Let $\bm{h}$ and $\hat{\bm{h}}$ be the coordinate vectors of $A$ and $J_{n,d}(A)$ with respect to the left lexicographic basis $\mathfrak{X}_d^n$ and $\mathfrak{X}_{d+1}^n$, respectively. Under these bases, there exists a unique matrix, also denoted by $J_{n,d}$, such that
		\begin{align*}
			\hat{\bm{h}} = J_{n,d} \bm{h}.
		\end{align*}
		\begin{remark}
			Unless otherwise stated, all vectors in this paper are default column vectors, denoted by bold letters.
		\end{remark}
		
		The prolongation matrix has a recurrence relation with respect to its subscripts $n$ and $d$, which is the foundation of  our proof.	\begin{proposition}\label{prop: basic of pro}
			The matrix $J_{n,d}$ has $\binom{n+d}{d+1}$ rows and $\binom{n+d-1}{d}$ columns. Its elements are 0 or 1. Each row has at least 1 and at most $n$ non-zero elements, and each column has exactly $n$ non-zero elements. It is appropriately assembled from $J_{n,d-1}$, $J_{n-1,d}$, and the identity matrix $I$ of order $\binom{n+d-2}{d}$ as follows:
			\begin{align*}
				J_{n,d} = 
				\left( 
				\begin{array}{@{}c|c@{}}
					& 0 \\ 
					J_{n,d-1} & \vdots \\ 
					& 0 \\ 
					& I \\ 
					\hline
					0 & J_{n-1,d} 
				\end{array} 
				\right).
			\end{align*}

		\end{proposition}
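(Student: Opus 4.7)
The plan is to unpack the definition $J_{n,d}(A)=A\cdot S_1(x)$ on monomials and read off each claim. Since $J_{n,d}$ carries $\mathfrak{X}_d^n$ into linear combinations of monomials in $\mathfrak{X}_{d+1}^n$, the matrix has $|P_{d+1}|=\binom{n+d}{d+1}$ rows and $|P_d|=\binom{n+d-1}{d}$ columns. The entry structure is immediate: for $x^\alpha\in\mathfrak{X}_d^n$ one has $J_{n,d}(x^\alpha)=\sum_{i=1}^n x^{\alpha+e_i}$, with the $n$ multi-indices $\alpha+e_i$ all distinct, which simultaneously shows that every entry lies in $\{0,1\}$ and that each column has exactly $n$ ones. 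For the row counts, the row indexed by $x^\beta$ has a $1$ in column $x^\alpha$ precisely when $\alpha=\beta-e_i$ for some $i$ with $\beta_i\geq 1$, so the number of ones in that row equals $|\{i:\beta_i\geq 1\}|$, which lies between $1$ and $n$ because $|\beta|=d+1\geq 1$.

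For the recurrence, I would exploit the left-lex order to split indices by their $x_1$-degree. Partition the columns $\mathfrak{X}_d^n$ into the lex-consecutive blocks $C_{\geq 1}=\{\alpha_1\geq 1\}$ and $C_0=\{\alpha_1=0\}$, of sizes $\binom{n+d-2}{d-1}$ and $\binom{n+d-2}{d}$; similarly split the rows $\mathfrak{X}_{d+1}^n$ into three blocks $R_{\geq 2}$, $R_1$, $R_0$ according to $\beta_1\geq 2$, $\beta_1=1$, $\beta_1=0$, of sizes $\binom{n+d-2}{d-1}$, $\binom{n+d-2}{d}$, $\binom{n+d-1}{d+1}$. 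Multiplication by $x_1$ yields order-preserving bijections $P_{d-1}\to C_{\geq 1}$ and $P_d^{n-1}\to R_1$ (where $P_d^{n-1}$ denotes monomials in $(x_2,\ldots,x_n)$ of degree $d$), while $C_0\cong P_d^{n-1}$ and $R_0\cong P_{d+1}^{n-1}$ trivially. A short check using the convention that the lex order first compares the $x_1$-degree confirms that under these identifications the submatrices really do sit in the four corners pictured in the statement.

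Next I would read off the four blocks from two one-line identities. For $x^\alpha=x_1\tilde q\in C_{\geq 1}$ with $\tilde q\in P_{d-1}$,
\[
J_{n,d}(x_1\tilde q)=x_1\cdot J_{n,d-1}(\tilde q),
\]
so the image lies entirely in $R_{\geq 1}=R_{\geq 2}\sqcup R_1$; this forces the bottom-left block to be $0$, and identifies the top-left block (after stripping $x_1$) with $J_{n,d-1}$. For $x^\alpha=r\in C_0$ with $r\in P_d^{n-1}$,
\[
J_{n,d}(r)=x_1 r + r\cdot(x_2+\cdots+x_n),
\]
so the contribution to $R_{\geq 2}$ vanishes (giving the $0$ at the top of the right block), the contribution to $R_1$ is the single monomial $x_1 r$ which under $R_1\cong P_d^{n-1}$ is $r$ itself (producing the identity $I$ of order $\binom{n+d-2}{d}$), and the contribution to $R_0$ is exactly $J_{n-1,d}(r)$ acting on the $n-1$ variables $(x_2,\ldots,x_n)$. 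Assembling these four blocks gives the claimed formula.

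The main obstacle is not analytic but notational: one has to verify that the left-lex order on $\mathfrak{X}_d^n$ really does list $C_{\geq 1}$ entirely before $C_0$, that on $\mathfrak{X}_{d+1}^n$ it lists $R_{\geq 2}$ before $R_1$ before $R_0$, and that multiplication by $x_1$ respects the induced lex order within each sub-block. Once this combinatorial setup is fixed, the four block identities follow at once from the two displayed formulas above.
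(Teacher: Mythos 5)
Your proposal is correct, and the recurrence is derived by essentially the same decomposition the paper uses: split by $x_1$-degree and observe that $x_1\tilde q\cdot S_1 = x_1\,J_{n,d-1}(\tilde q)$ while $r\cdot S_1 = x_1 r + r(x_2+\cdots+x_n)$ for $r$ free of $x_1$; the paper just phrases this at the level of coordinate vectors $(\bm h',\bm h_d)$ rather than monomial index sets. Where you genuinely diverge is on the numerical characteristics: the paper proves the $0/1$-entries and the row/column non-zero counts by induction on $n$, reading them off the Macaulay block form \eqref{equ: mac repr Jnd} (the identity blocks contribute one new entry per row and column). You instead argue directly from the column-level identity $J_{n,d}(x^\alpha)=\sum_{i=1}^n x^{\alpha+e_i}$ (with the $n$ exponents distinct, giving exactly $n$ ones per column) and its row-level dual ($\beta$ is hit precisely by those $\alpha=\beta-e_i$ with $\beta_i\ge 1$, giving between $1$ and $n$ ones per row). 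Your direct counting argument is shorter and self-contained, while the paper's inductive version has the pedagogical merit of showcasing the Macaulay block structure that is reused throughout the rest of the paper. One small hygiene point you already flagged yourself: one does need to note that left-lex places $C_{\ge 1}$ entirely before $C_0$ and $R_{\ge 2}$ before $R_1$ before $R_0$, and that multiplying by $x_1$ is order-preserving on these blocks; this is straightforward but should be stated to justify that the four submatrices land where the block picture says they do.
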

		\begin{proof}
			By direct computation, we have
			\begin{align*}
				A(x)S_1(x)=&[x_1(\sum_{\alpha_1 \geq 1} a_{\alpha} x_1^{\alpha_1-1}\cdots x_n^{\alpha_n})+\sum_{\alpha_1=0 } a_\alpha x_2^{\alpha_2}\cdots x_n^{\alpha_n}]\cdot (x_1+\cdots + x_n)\\
				=&x_1(\sum_{\alpha_1 \geq 1} a_{\alpha} x_1^{\alpha_1-1}\cdots x_n^{\alpha_n})\cdot (x_1+\cdots + x_n) + x_1 (\sum_{\alpha_1=0 } a_\alpha x_2^{\alpha_2}\cdots x_n^{\alpha_n}) \\
				&+ (\sum_{\alpha_1=0 } a_\alpha x_2^{\alpha_2}\cdots x_n^{\alpha_n})(x_2+\cdots +x_n).
			\end{align*}
			Set $\bm{h}=(\bm{h}', \bm{h}_d)$, where $\bm{h}'$ and $\bm{h}_d$ are the coordinate vectors of two component polynomials of $A(x)$:  $\sum_{\alpha_1 \geq 1} a_{\alpha} x_1^{\alpha_1}\cdots x_n^{\alpha_n}$ and $\sum_{\alpha_1=0 } a_\alpha x_2^{\alpha_2}\cdots x_n^{\alpha_n}$, respectively. Note that the coordinate vector $\bm{h}'$ corresponds to the coordinate vector of $\sum_{\alpha_1 \geq 1} a_{\alpha} x_1^{\alpha_1-1}\cdots x_n^{\alpha_n}$ with respect to the basis $\mathfrak{X}_{d-1}^n$ of the space  $P_{d-1}$. The polynomial decomposition of $A(x)S_1(x)$ above is equivalent to the following column vector decomposition:
			$$
			\hat{\bm{h}}= \left(  \begin{array}{c}
				J_{n,d-1}\bm{h}' \\
				\hline
				0
			\end{array}\right)
			+ \left(  \begin{array}{c}
				0\\
				\bm{h}_d\\
				\hline
				0
			\end{array}\right)+ \left(  \begin{array}{c}
				0 \\
				\hline
				J_{n-1,d}\bm{h}_d
			\end{array}\right).
			$$
			Converting the above equation into the multiplication of block matrices proves the recurrence formula of $J_{n,d}$. 
			Using the recurrence relation multiple times until $d$ decreases to 0, we get:
			\begin{align}\label{equ: mac repr Jnd}
				J_{n,d}=
				\begin{pmatrix}
					1 &&&&&\\
					J_{n-1,0}& I &&&&\\
					& J_{n-1,1} & I &&&\\
					&&\cdots &&I&\\
					&&&\cdots & J_{n-1,d-1} &I \\
					&&&&&J_{n-1,d}
				\end{pmatrix}.
			\end{align}
			
			Next, we prove the numerical characteristics of the matrix $J_{n,d}$ by induction. 
			
			When $n=1$ and  $\forall d\geq 0$, it is clear.
			
			Assume that $\forall d \geq 0$, the elements in the matrix $J_{n-1,d}$ are 0 or 1, each row has at least 1 and at most $n-1$ non-zero elements, and each column has exactly $n-1$ non-zero elements. From (\ref{equ: mac repr Jnd}), we find that the identity matrices on the diagonal add exactly one non-zero element to each row and each column of $J_{n,d}$. Thus, each row of $J_{n,d}$ has at least 1 and at most $n$ non-zero elements, and each column has exactly $n$ non-zero elements.
			We thus complete the proof of \cref{prop: basic of pro}.
		\end{proof}

		We call (\ref{equ: mac repr Jnd}) the \textbf{Macaulay representation} of the prolongation matrix $J_{n,d}$. For simplicity, the subscripts of the identity matrices on the diagonal are omitted, and their sizes are the same as the binomial coefficients in the sum on the right-hand side of the equation
		$$\binom{n+d-1}{d}=1+\binom{n-1}{1}+\binom{n}{2}+ \cdots + \binom{n+d-2}{d}.$$
		Note that the above equality correspongds to the \textbf{$d$-th Macaulay representation} of the integer $\binom{n+d-1}{d}-1$.
		
		Group the aforementioned monomial basis of $P_d$ according to the maximum power of $x_1$ they contain,  there is the following direct sum decomposition:
		$$P_d= \bigoplus_{0 \leq j \leq d}x_1^{d-j} (P/x_1)_j.$$
		This gives a   direct sum decomposition of the coordinate vector $\bm{h}$ of $$A(x)=\sum_{0 \leq j \leq d}x_1^{d-j}A_j(x_2,\cdots,x_n)$$ as follows: 
		\[
		h=\left(
		\aligned
		&\bm h_0  \\
		&\bm h_1\\
		&\vdots\\
		&\bm h_d
		\endaligned
		\right),
		\]
		where $\bm{h}_j$ is the coordinate vector of the  polynomial term $A_j$ with respect to the basis $\mathfrak{X}_{j}^{n-1}$.
		\begin{proposition}
			\label{prop: nonneg crit}
			The non-negativity of the coordinate vector $J_{n,d}\bm{h}$ of $A(x)S_1(x)$ is equivalent to the following $d+1$ linear constraints:
			\begin{align*}
				\begin{cases}
					\bm{h}_0 \geq 0, \\
					J_{n-1,i-1}\ \bm{h}_{i-1} + \bm{h}_{i} \geq 0, & 1 \leq i \leq d.\\
					J_{n-1,d} \ \bm{h}_d \geq 0.
				\end{cases}
			\end{align*}
			
		\end{proposition}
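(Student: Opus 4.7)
The plan is to read off the claim directly from the Macaulay representation \eqref{equ: mac repr Jnd} of $J_{n,d}$, once we match its block structure with the $x_1$-power decomposition $P_d = \bigoplus_{j=0}^d x_1^{d-j}(P/x_1)_j$ that defines the $\bm{h}_j$. So the approach is essentially: identify column blocks, identify row blocks, then multiply and compare entrywise.

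First, I would observe that in the left lexicographic order of $\mathfrak{X}_d^n$, the monomials with exact $x_1$-factor $x_1^{d-j}$ form a single contiguous block of size $\binom{n+j-2}{j}$, which is precisely the size of the $j$-th column block appearing in \eqref{equ: mac repr Jnd}. Hence $\bm{h} = (\bm{h}_0^T, \bm{h}_1^T, \ldots, \bm{h}_d^T)^T$ partitions as a block vector compatible with the column partition of $J_{n,d}$. Analogously, an $x_1$-power decomposition of $P_{d+1}$ identifies the $d+2$ row blocks of $J_{n,d}$, of sizes $\binom{n+i-2}{i}$ for $i=0,\ldots, d+1$, with coefficients of $x_1^{d+1-i}$ times homogeneous degree-$i$ polynomials in $x_2,\ldots,x_n$.

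Next, I would carry out the block multiplication $J_{n,d}\bm{h}$ using \eqref{equ: mac repr Jnd}. Reading off the output of each row block: the row block $i=0$ outputs the scalar $\bm{h}_0$; each intermediate row block $i$ (with $1 \le i \le d$) outputs $J_{n-1,i-1}\bm{h}_{i-1} + \bm{h}_i$ by adding the contribution $J_{n-1,i-1}$ from column block $i-1$ to the identity contribution from column block $i$; and the last row block $d+1$ outputs $J_{n-1,d}\bm{h}_d$. Since $J_{n,d}\bm{h} \geq 0$ holds entrywise iff each row block is entrywise non-negative, this matches the asserted system.

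There is essentially no technical obstacle here; the only point to verify carefully is the matching of block sizes in \eqref{equ: mac repr Jnd} with those of the decomposition $A(x) = \sum_{j=0}^d x_1^{d-j} A_j(x_2,\ldots,x_n)$. As a sanity check, one may rederive the identity algebraically: expanding $A(x) S_1(x) = \sum_{j=0}^d (x_1^{d-j+1} A_j + x_1^{d-j} A_j \cdot (x_2+\cdots+x_n))$ and grouping by the exact power of $x_1$, the coefficient vector of $x_1^{d+1-i}$ equals $\bm{h}_i + J_{n-1,i-1}\bm{h}_{i-1}$ for $1 \leq i \leq d$, with the boundary terms giving $\bm{h}_0$ and $J_{n-1,d}\bm{h}_d$; this confirms the block identity without ever invoking \eqref{equ: mac repr Jnd}, and can be used as a second, independent verification.
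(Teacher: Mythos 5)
Your proof is correct and follows the paper's intended argument: the proposition is stated immediately after the Macaulay representation \eqref{equ: mac repr Jnd} and the $x_1$-power block decomposition of $\bm{h}$, and is meant to be read off by exactly the block multiplication you perform, with your algebraic sanity check being essentially the same computation used to establish \eqref{equ: mac repr Jnd} in the first place.
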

		
		\subsection{A reduction of the problem}
		In this section, we reduce the study of the weak sum-of-squares conjecture (\cref{conj: weak sos}) to the problem of estimating the lower bounds of the ranks of certain specific vectors.
		
		For the Euclidean space $\R^n$ of dimension $n$, we define the counting function
		\begin{align*}
			(P,N,Z):\R^n &\rightarrow \Z^3\\
			\bm x&\mapsto (P(\bm x), N(\bm x), Z(\bm x)).
		\end{align*}
		Here, $P(\bm{x})$, $N(\bm{x})$, and $Z(\bm{x})$ denote the numbers of positive components, negative components, and zero components of the vector $\bm{x}$, respectively. By definition, $R(\bm{x}) + Z(\bm{x}) = n$, where $R(\bm{x}) := P(\bm{x}) + N(\bm{x})$ is called the rank of the vector $\bm{x}$.
		
		The following properties of the counting function are easy to prove, and we usually do not explicitly mention them when using them:
		\begin{itemize}
			\item [(a)] $\forall \bm{x}, \bm{y} \in \R^n$, $P(\bm{x} + \bm{y}) \leq P(\bm{x}) + P(\bm{y})$;
			
			\item [(b)] $\forall \bm{y} \in \R^n$, $\bm{x} \geq 0$, $R(\bm{x}) = P(\bm{x})$, $P(\bm{y}) \leq P(\bm{x} + \bm{y})$;
			
			\item [(c)] $\forall \bm{x}, \bm{y} \in \R^n$, $| R(\bm{x}) - R(\bm{y}) | \leq R(\bm{x} + \bm{y}) \leq R(\bm{x}) + R(\bm{y})$.
		\end{itemize}
		Now that the rank of the vector has been defined, we proceed as follows:
		let $ H = \mathrm{diag} \{ h_1, \cdots, h_N\}$
		be  the matrix associated to the  diagonal Hermitian $d$-form $A(z, \bar{z}) = \sum_{|\alpha|= d} a_\alpha |z^\alpha|^2$ with respect to the left lexicographic basis   $\mathfrak{Z}_d=(z_1^d, z_1^{d-1}z_2,\cdots, z_n^d)$, where $N=\binom{n + d - 1}{d}$.
		
		Making the variable substitution $x^\alpha = |z^\alpha|^2$, we have $||z||^2 = x_1 + \cdots + x_n$, where $x_i \in \R$.
		Then  $H$ corresponds to the column vector $\bm{h}$ of dimension $\binom{n + d - 1}{d}$, which satisfies 
		\[A(z,\bar z)=\mathfrak{Z}_dH\mathfrak{Z}_d^*=\mathfrak{X}_d\bm h.\] Thus $A \in \Sigma_{n,d}$ is equivalent to $\bm{h} \geq 0$. 
		According to the definition of the prolongation map,
		\[A(z, \bar{z})||z||^2 = \mathfrak{Z}_dH\mathfrak{Z}_d^*||z||^2 = A(x)S_1(x) = \mathfrak{X}_d\bm{h}(x_1 + \cdots + x_n) = \mathfrak{X}_{d + 1}J_{n,d}\bm{h}.\]
		The above equation shows that the rank $R(A(z, \bar{z})||z||^2)$ is equal to 
		$|\mathrm{supp}(\mathfrak{X}_{d + 1}J_{n,d}\bm{h})|$,
		which  is equal to   $R(J_{n,d}\bm{h})$   the rank  of the vector $J_{n,d}\bm{h}$.

		Note that $A(z, \bar{z})||z||^2 \in \Sigma_{n,d + 1}$ is equivalent to $J_{n,d}\bm{h} \geq 0$. 
		
		Define
		$$R_{n,d} := \mathrm{min}\{ R(J_{n,d}\bm{h}) \mid \bm{h} \ngeq 0, J_{n,d}\bm{h} \geq 0, \bm{h} \in \R^{\binom{n + d - 1}{d}}\}.$$
		
		Now \cref{thm: main 1} can be restated in terms of $R_{n,d}$ as follows.
		
		\begin{theorem}\label{thm:main-1-restate}The following estimates hold:
			\begin{itemize}
				\item[(a)] $\forall n, d \geq 2$,
				\begin{align*}
					R_{n,d} \geq 3n - 4.
				\end{align*}
				\item[(b)] $\forall n \geq 6$,
				\begin{align*}
					R_{n,2} \geq \frac{n^2 + n}{2} - 6.
				\end{align*}
			\end{itemize}
		\end{theorem}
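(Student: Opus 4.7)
The plan is to induct on $n$, exploiting the block structure of $J_{n,d}$ given by its Macaulay representation (\cref{prop: basic of pro}) together with the equivalent non-negativity system in \cref{prop: nonneg crit}. Write $\bm h \in \R^{\binom{n+d-1}{d}}$ as $\bm h=(\bm h_0,\bm h_1,\ldots,\bm h_d)$, grouping coefficients by the power of $x_1$, so that each $\bm h_i$ is the coordinate vector of some $A_i(x_2,\ldots,x_n)\in P_i$ in $n-1$ variables. Because the blocks $\hat{\bm h}_0,\ldots,\hat{\bm h}_{d+1}$ of $\hat{\bm h}:=J_{n,d}\bm h$ lie in disjoint monomial subspaces (indexed by $x_1$-degree), we obtain the key additivity
\[
R(\hat{\bm h})=\sum_{i=0}^{d+1} R(\hat{\bm h}_i),
\]
so the problem reduces to bookkeeping the nonzero positions in each $\hat{\bm h}_i$. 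The base case $n=2$ gives $R_{2,d}\geq 2=3(2)-4$: the non-negativity system of \cref{prop: nonneg crit} collapses to a chain $\bm h_{i-1}+\bm h_i\geq 0$ of scalar inequalities, and any sign change in $\bm h$ produces at least two nonzero entries in $\hat{\bm h}$ (the diagonal version of Huang's lemma).

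For the inductive step, let $k\geq 1$ be the smallest index with $\bm h_k\not\geq 0$, which exists because $\bm h\not\geq 0$ while $\bm h_0\geq 0$. We split on whether $k=d$ or $k<d$. If $k=d$, the vector $\bm h_d$ itself satisfies the hypotheses of the theorem in $n-1$ variables, so by induction
\[
R(\hat{\bm h}_{d+1})=R(J_{n-1,d}\bm h_d)\geq 3(n-1)-4=3n-7;
\]
the remaining three nonzero entries must come from the cancellation $J_{n-1,d-1}\bm h_{d-1}+\bm h_d\geq 0$, which forces $\bm h_{d-1}$ to have support large enough (controlled by \cref{thm: Mac estimate}) that $\hat{\bm h}_{d-1}$ and $\hat{\bm h}_d$ together contribute at least three additional nonzeros. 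If $k<d$, decompose $\bm h_k=\bm h_k^+-\bm h_k^-$ and record the containment $\mathrm{supp}(\bm h_k^-)\subseteq\mathrm{supp}(\bm h_{k-1})^{\langle 1\rangle}$ forced by $J_{n-1,k-1}\bm h_{k-1}+\bm h_k\geq 0$. The support of $\bm h_{k-1}$ is then bounded from below by a Macaulay-inverse argument; tracking how $J_{n-1,k}\bm h_k^-$ and $J_{n-1,k}\bm h_k^+$ combine inside $\hat{\bm h}_{k+1}$, and applying \cref{thm: Mac estimate} to bound possible cancellations, should yield the needed lower bound on $R(\hat{\bm h}_{k-1})+R(\hat{\bm h}_k)+R(\hat{\bm h}_{k+1})$ directly, rather than through a fresh inductive reduction (the tail $(\bm h_k,\ldots,\bm h_d)$ is not a single-degree polynomial, so induction does not apply to it verbatim).

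Part (b) refines this analysis to $d=2$, where the Macaulay data is very rigid: $\bm h_0\in\R$, $\bm h_1\in\R^{n-1}$, $\bm h_2\in\R^{\binom{n}{2}}$, and the non-negativity system of \cref{prop: nonneg crit} reduces to three explicit inequalities. The approach is to enumerate sign patterns of $(\bm h_0,\bm h_1,\bm h_2)$ directly, apply \cref{thm: Mac estimate} to the monomial subspaces of $P_1$ and $P_2$, and show that all but at most six of the $\binom{n+1}{2}$ coordinates of $\hat{\bm h}$ are forced to be nonzero. The main obstacle throughout is the case $k<d$ of (a): the simultaneous support propagation through $J_{n-1,k-1}$ and $J_{n-1,k}$ is delicate, and the positive contributions to $\hat{\bm h}_{k-1},\hat{\bm h}_k,\hat{\bm h}_{k+1}$ must be carefully balanced against potential cancellations in order to upgrade the naive inductive bound $3n-7$ to the sharp $3n-4$.
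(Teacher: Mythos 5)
Your proposal correctly identifies the starting point: the block-additivity $R(J_{n,d}\bm h)=\sum_i R(\hat{\bm h}_i)$ from the Macaulay representation, induction on $n$ with base case $R_{2,d}\geq 2$, and the observation that the constraint $J_{n-1,d-1}\bm h_{d-1}+\bm h_d\geq 0$ forces extra support. However, the inductive step as you sketch it has a genuine gap, which you partly acknowledge yourself, and it is also structurally quite different from what the paper does.

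The gap: in your case $k=d$, induction in $n-1$ variables gives $R(J_{n-1,d}\bm h_d)\geq 3n-7$, and you then assert that the cancellation structure yields ``at least three additional nonzeros.'' That is precisely the hard part, and nothing you write actually pins it down: $\bm h_0=\cdots=\bm h_{d-1}$ nonnegative with one of them nonzero only guarantees (after some work) two nonzero slack entries, not three, and the third requires a separate argument that splits on whether $\bm h_0=0$ (in which case Huang's lemma applied to $\bm h_1$ gives a much larger contribution) or $\bm h_0>0$. Your case $k<d$ is a sketch of an intention, not an argument — you say the support propagation ``should yield'' the bound, and note that the balancing ``is delicate.'' There is also a structural omission: you only descend in $n$, but the paper's recursion is $R_{n,d}\geq\min\{R_{n,d-1},\,3+R_{n-1,d},\,3n-4\}$, i.e.\ the case $\bm h_d=0$ reduces the degree $d$, not the number of variables, and the final induction is a two-dimensional induction on the lattice $(n,d)$.

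The paper's actual route is also organized differently. It does not split on the smallest negative block index. It introduces the slack variables $\bm\gamma_i=J_{n-1,i-1}\bm h_{i-1}+\bm h_i$, rewrites $J_{n-1,d}\bm h_d$ as the alternating sum $\sum_i(-1)^{d-i}J_{n-1,d}\cdots J_{n-1,i}\bm\gamma_i$, and then makes a sixfold case analysis on the sign pattern of $\bm h_d$ and on $R(\bm\gamma)$. The hard cases $R(\bm\gamma)\leq 2$ are handled by two dedicated lemmas (\cref{lem: estimate on jnd for gamma rank 1} and \cref{lem: est gamma rank 2}) that bound the rank of the alternating sum; the case $\bm h_d\geq 0$, $P(\bm h_d)\leq 2$ is handled by constructing an explicit ``patch vector'' $\bm\delta$ with $J_{n-1,d-1}\bm\delta\geq\bm h_d$ to reduce $d$ by one. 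None of these devices appear in your plan. For part (b), your idea of ``enumerate sign patterns directly'' also underestimates the difficulty: the paper's proof of (b) hinges on a nontrivial Macaulay-type estimate for the \emph{second} prolongation (\cref{lem: est sec pro}), proved by its own induction, which is used to dominate $Z(J_{n-1,2}J_{n-1,1}\bm h_1)$ in the regime $N(\bm h_1)\geq 2$, $R(\bm\gamma_2)\leq N(\bm h_1)-1$; without some analogue of that lemma the quadratic lower bound does not come out.

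In short: right framing, right additivity, right base case, but the central machinery — the $\bm\gamma$-alternating-sum lemmas, the patch vectors, the $R_{n,d-1}$ reduction, and the second-prolongation Macaulay estimate — is missing, and the steps you leave as ``should yield'' are exactly where the paper has to work hardest.
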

		
		In order to get the lower bound of $R_{n,d}$,  we will be dedicated to estimating $R(J_{n,d}\bm{h})$ from now.
		
		When $\bm h\geq 0$, the following theorem is a weak version of a result due to Grundmeier and Halfpap Kacmarcik.
		
		\begin{theorem}[{\cite[Proposition 3]{G3}}]\label{thm: GHP}
			For $n, d \geq 2$, let $\bm{h}$ be a non-negative vector in $\R^{\binom{n + d - 1}{d}}$ with rank $k$. Then
			$$nk - \frac{k(k - 1)}{2} \leq R(J_{n,d}\bm{h}) \leq nk, \quad k \leq n - 1,$$
			$$R(J_{n,d}\bm{h}) \geq \frac{n(n + 1)}{2}, \quad k \geq n.$$
		\end{theorem}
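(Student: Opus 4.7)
The plan is to convert the rank estimate into a purely combinatorial question about lattice points. Since $\bm{h} \geq 0$, the coefficients of $A(x)S_1(x)$ are also non-negative, so no cancellation can occur and $R(J_{n,d}\bm{h})$ equals $|\mathrm{supp}(A(x)S_1(x))|$. Writing $\mathrm{supp}(A) = \{\alpha_1, \ldots, \alpha_k\}$ and $e_i$ for the $i$-th standard basis vector of $\mathbb{Z}^n$, I would set
$$T_k := \{\alpha_j + e_i : 1 \leq j \leq k,\ 1 \leq i \leq n\},$$
so that $R(J_{n,d}\bm{h}) = |T_k|$. The upper bound $R \leq nk$ is then immediate from the crude estimate $|T_k| \leq nk$, and the entire task is to prove the lower bound.

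For the lower bound I would induct on $k$, adding the source monomials one at a time. The base case $|T_1| = n$ is clear, since the vectors $\alpha_1 + e_1, \ldots, \alpha_1 + e_n$ are pairwise distinct. For the inductive step,
$$|T_k| = |T_{k-1}| + n - c_k, \qquad c_k := \big|\{\, i : \alpha_k + e_i \in T_{k-1}\,\}\big|,$$
so it suffices to prove the collision bound $c_k \leq k-1$. The key combinatorial lemma is that for each fixed $j < k$, the equation $\alpha_k + e_i = \alpha_j + e_{i'}$ is equivalent to $\alpha_k - \alpha_j = e_{i'} - e_i$. Since the $\alpha_j$'s are distinct, the right-hand side must be a nonzero lattice vector with exactly one $+1$ entry and one $-1$ entry; when this is the case, the decomposition as $e_{i'} - e_i$ is unique, so each $j < k$ contributes at most one colliding value of $i$. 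Summing over $j < k$ gives $c_k \leq k-1$, and the telescoping recurrence yields
$$|T_k| \;\geq\; \sum_{j=0}^{k-1}(n - j) \;=\; nk - \frac{k(k-1)}{2},$$
which settles the range $1 \leq k \leq n-1$.

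To handle $k \geq n$, I would simply observe that enlarging the set of source monomials only enlarges the target set, so $T_n \subseteq T_k$ for any choice of $n$ of the $k$ sources. Applying the previous estimate at $k = n$ then gives $|T_k| \geq |T_n| \geq n^2 - n(n-1)/2 = n(n+1)/2$, which is the remaining bound. The only genuine obstacle is the collision lemma itself: one has to verify that a nonzero vector of the form $\alpha_k - \alpha_j$ admits at most one representation as $e_{i'} - e_i$, which is precisely what caps each $|S_j|$ at $1$ and therefore caps $c_k$ at $k-1$. Once this mild lattice-geometric point is isolated, the remainder is routine induction and the monotonicity step for $k \geq n$.
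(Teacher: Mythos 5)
The paper states this result by citing \cite[Proposition 3]{G3} (Grundmeier–Halfpap Kacmarcik) and does not reproduce a proof, so there is no internal argument in this paper to compare against. That said, your proof is correct and self-contained, and it is worth noting that the cited source develops this estimate within the framework of Macaulay's theorem, whereas you give a direct lattice-point count; the two are complementary.

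Your reduction is sound: since $\bm h\geq 0$ there is no cancellation in $A(x)S_1(x)$, so $R(J_{n,d}\bm h)=|T_k|$ where $T_k=\{\alpha_j+e_i:1\le j\le k,\ 1\le i\le n\}$. The upper bound $|T_k|\le nk$ is immediate. The heart of the matter is the collision bound $c_k\le k-1$ in the recursion $|T_k|=|T_{k-1}|+n-c_k$, and your lemma does the job: if $\alpha_k+e_i=\alpha_j+e_{i'}$ then $\alpha_k-\alpha_j=e_{i'}-e_i$ is a nonzero integer vector with coordinate sum $0$; for it to be of this form it must have a single $+1$ and a single $-1$, and then the pair $(i,i')$ is forced (the $-1$ fixes $i$, the $+1$ fixes $i'$), so each index $j<k$ can account for at most one colliding $i$. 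The injection from colliding $i$'s to indices $j$ is then well defined, giving $c_k\le k-1$ and the telescoped bound $|T_k|\ge nk-\binom{k}{2}$, valid through $k=n$ (where it equals $n(n+1)/2$). The monotonicity step $T_n\subseteq T_k$ for $k\ge n$ (restricting to any $n$ of the sources) cleanly finishes the $k\ge n$ case, correctly sidestepping the fact that the closed-form bound ceases to be increasing past $k=n+1$. One small presentational slip: you invoke a set $S_j$ in the closing sentence without having introduced it, though the intended meaning ($S_j=\{i:\alpha_k+e_i=\alpha_j+e_{i'}\text{ for some }i'\}$, with $|S_j|\le1$) is clear from context. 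This does not affect correctness.
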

		
		More generally, Gao and Ng have the following estimate for the rank of $A(z, \bar{z})\|z\|_{r,s}^2$, which is not necessarily a sum of squares.
		
		\begin{theorem}[{\cite[Proposition 2.3]{G1}}]\label{thm: Gao-Ng}
			Let $n\geq 1$, $z\in \C^n$ and $A(z,\bar z)$ is a nonzero bihomogeneous Hermitian polynormial. Then 
			$$R(A(z, \bar{z})||z||^2_{r,s}) \geq r + s,\ \forall A(z, \bar{z}) \in P_{n,d}.$$
			Here, $||z||_{r,s}^2 = |z_1|^2 + \cdots + |z_r|^2 - |z_{r+1}|^2 - \cdots - |z_{r+s}|^2$, where $1 \leq r + s \leq n$.
		\end{theorem}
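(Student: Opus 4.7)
The plan is to use the standard correspondence between a bihomogeneous Hermitian polynomial and its Hermitian coefficient matrix: for $H(z,\bar z)$ of bidegree $(e,e)$, one has $\mathrm{rank}(H) = \dim_{\mathbb C}\operatorname{span}\{\tilde h_\gamma : |\gamma|=e\}$, where
\[
\tilde h_\gamma(z) := \frac{1}{\gamma!}\frac{\partial^\gamma H}{\partial\bar z^\gamma}(z,0)
\]
is the polynomial whose coefficients form the $\gamma$-th column of the coefficient matrix. Applied to $B:=A(z,\bar z)\|z\|^2_{r,s}$ via a Leibniz expansion---using that $\partial^{\beta-\gamma}\|z\|^2_{r,s}/\partial\bar z^{\beta-\gamma}$ evaluates nontrivially at $\bar z=0$ only when $\beta-\gamma=e_k$ for some $k\leq r+s$---one obtains the key identity
\[
\tilde H_\beta(z) = \sum_{\substack{1\leq k\leq r+s \\ \beta_k\geq 1}} \mu_k\,z_k\,\tilde h_{\beta-e_k}(z), \qquad |\beta|=d+1,
\]
where $\tilde h_\gamma$ are the analogous polynomials of $A$ and $\mu_k=\pm 1$ are the signs of $\|z\|^2_{r,s}$. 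The task is then reduced to exhibiting $r+s$ linearly independent $\tilde H_\beta$'s.

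Since $A\not\equiv 0$, I would fix $\gamma$ of weight $d$ with $\tilde h_\gamma\not\equiv 0$ and look at the candidates $\tilde H_{\gamma+e_k}$ for $k=1,\ldots,r+s$. Each decomposes as
\[
\tilde H_{\gamma+e_k} = \mu_k z_k\tilde h_\gamma \;+\; \sum_{\substack{j\neq k,\; j\leq r+s \\ \gamma_j\geq 1}} \mu_j\,z_j\,\tilde h_{\gamma+e_k-e_j},
\]
a principal term $\mu_k z_k\tilde h_\gamma$ plus corrections. The principal terms $\{\mu_k z_k\tilde h_\gamma\}_{k=1}^{r+s}$ are linearly independent---since $\tilde h_\gamma\not\equiv 0$ and the polynomial ring is an integral domain, so $\sum c_k\mu_k z_k\tilde h_\gamma=0$ forces $\sum c_k\mu_k z_k=0$, hence $c_k=0$. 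Thus in the favorable case where $\gamma$ can be chosen with $\mathrm{supp}\,\gamma$ disjoint from $\{1,\ldots,r+s\}$ the correction sums are empty and the bound $R\geq r+s$ is immediate.

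The main obstacle is the general situation in which every $\gamma$ with $\tilde h_\gamma\neq 0$ has support meeting $\{1,\ldots,r+s\}$. I would address this by choosing $\gamma$ to be lex-maximal among such multi-indices: then $\tilde h_{\gamma+e_k-e_j}=0$ whenever $\gamma+e_k-e_j>\gamma$ in lex order, which happens precisely when $k<j$. This kills half the corrections and leaves a triangular structure on the coefficients of $\tilde H_{\gamma+e_1},\ldots,\tilde H_{\gamma+e_{r+s}}$ in an appropriate ordered basis. The hard part will be handling the remaining corrections with $k>j$, because the polynomials $\tilde h_{\gamma+e_k-e_j}$ there can be proportional to $\tilde h_\gamma$ (for example when $A$ has rank one), so a crude triangular argument does not conclude on its own; I expect a second-layer extremality argument, or an induction on $|\mathrm{supp}\,\gamma\cap\{1,\ldots,r+s\}|$, to close this gap. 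An alternative route, which is the one taken by Gao--Ng in \cite{G1}, is geometric: recast the identity $A\|z\|^2_{r,s}=\sum_i|f_i|^2-\sum_j|g_j|^2$ as a rational CR map between complex hyperquadrics and invoke a Huang-style dimension-jump lemma, bypassing the combinatorial bookkeeping at the cost of heavier CR-geometric machinery.
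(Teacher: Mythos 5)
The paper does not prove Theorem~\ref{thm: Gao-Ng}; it is a cited result (Proposition~2.3 of Gao and Ng~\cite{G1}) that the present paper uses as a black box. So there is no in-paper proof to compare your attempt against, only the external one.

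As for your attempt on its own terms: the setup is correct. The rank of a bihomogeneous Hermitian polynomial equals the dimension of the span of the column polynomials $\tilde h_\gamma$, and your Leibniz identity $\tilde H_\beta = \sum_{k\le r+s,\ \beta_k\ge 1}\mu_k\,z_k\,\tilde h_{\beta-e_k}$ for $B=A\|z\|^2_{r,s}$ checks out (the binomial factor $\binom{\beta}{\beta-e_k}=\beta_k$ cancels the factorial rescaling). The case where $\mathrm{supp}\,\gamma$ is disjoint from $\{1,\dots,r+s\}$ is also handled correctly. But the core step---exhibiting $r+s$ linearly independent $\tilde H_{\gamma+e_k}$ in general---is not carried out, and you diagnose the obstruction yourself: after choosing $\gamma$ lex-maximal, only the corrections with $k<j$ vanish, and the surviving corrections $\mu_j z_j \tilde h_{\gamma+e_k-e_j}$ with $j<k$ can share leading monomials with the principal term (for instance when the $\tilde h$'s are mutually proportional, as for a rank-one $A$). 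The lex structure is therefore not triangular in any basis you have produced, and the proposed remedies (a ``second-layer extremality argument'' or an induction on $|\mathrm{supp}\,\gamma\cap\{1,\dots,r+s\}|$) are stated as hopes rather than executed. This is a genuine gap: as written, the argument proves the bound only in the special case where some $\gamma$ with $\tilde h_\gamma\ne 0$ avoids the first $r+s$ indices, not in general. Gao and Ng close the statement by an entirely different, geometric route (a map into a hyperquadric and a Huang-type dimension argument), which you mention as an alternative but do not reproduce; completing your combinatorial approach would require a new idea beyond lex-maximality.
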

		
		According to the Macaulay representation of the prolongation matrix $J_{n,d}$ and the direct sum decomposition of the coordinate vector $\bm{h}$, we have 
		\begin{align}\label{equ: jndh formula}
			R(J_{n,d}\bm{h}) = R(\bm{h}_0) + \sum_{i = 1}^d R(J_{n - 1, i - 1}\bm{h}_{i - 1} + \bm{h}_i) + R(J_{n - 1, d}\bm{h}_d).
		\end{align}
		Set  $\bm{\gamma}_0 := \bm{h}_0$, $\bm{\gamma}_i := J_{n - 1, i - 1}\bm{h}_{i - 1} + \bm{h}_i, 1\leq i\leq d$, and 
		\[
		\bm \gamma :=\left(
		\aligned
		&\bm \gamma_0  \\
		&\bm \gamma_1\\
		&\vdots \\
		&\bm \gamma_d
		\endaligned
		\right),
		\]
		Then $J_{n,d}\bm h\geq 0$ is equivalent to $\bm \gamma_i\geq 0, 0\leq i\leq d$ and $J_{n-1,d}\bm h_d\geq 0$. From (\ref{equ: jndh formula}), we obtain that 
		\begin{align}\label{equ: jnd in gamma}
			R(J_{n,d}\bm{h}) = R(\bm{\gamma}) + R(J_{n - 1, d}\bm{h}_d) = R(\bm{\gamma}) + R\left(\sum_{i = 0}^d (-1)^{d - i}J_{n - 1, d}\cdots J_{n - 1, i}\bm{\gamma}_i\right).
		\end{align}
		Note that $R(\bm{\gamma})>0$. Otherwise, $\bm{h}=0$, this is a trivial case. 
		
		Estimating $R(J_{n,d}\bm{h})$ presents a challenge due to the difficulty in estimating the rank of the alternating sum on the right-hand side of the equation (\ref{equ: jnd in gamma}).  In the following, we give estimates of $R(J_{n,d}\bm{h})$ when $\bm{\gamma}$ is simple, this is crucial to the proof of Theorem \ref{thm:main-1-restate} (a), but not necessary for proving Theorem   \ref{thm:main-1-restate} (b). Since the proof of Theorem \ref{thm:main-1-restate} (a) relies on Theorem \ref{thm:main-1-restate} (b), we first prove the lemmas that will be used in  the latter’s proof. They are essentially the same as Macaulay's estimates, despite their apparent differences.
		
		\section{Macaulay type estimate of counting Functions}

		In the following lemma, we give an estimate of the counting function on the \textbf{first prolongation} of  vectors in $\mathbb{R}^{n}$.
		
		\begin{lemma}\label{lem: counting ineq}
			Let  $\bm{a}$  be a vector in $\mathbb{R}^n$. Denote $P(\bm a),N(\bm a),Z(\bm a)$ by $P, N, Z$ respectively. Then We have
			\begin{align*}
				P(J_{n,1}\bm{a}) &\geq \frac{P(P + 1)}{2} + PZ,\\
				N(J_{n,1}\bm{a}) &\geq \frac{N(N + 1)}{2} + NZ,\\
				Z(J_{n,1}\bm{a}) &\leq \frac{Z(Z + 1)}{2} + PN.
			\end{align*}
			The equalities hold when $PN = 0$. 
		\end{lemma}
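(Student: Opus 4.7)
The plan is to write $J_{n,1}\bm{a}$ out explicitly and tally its entries by the signs of their summands. By definition of the prolongation map, viewing $\bm{a}$ as the coefficient vector of $A(x)=a_1x_1+\cdots+a_nx_n$, the vector $J_{n,1}\bm{a}$ records the coefficients of $A(x)S_1(x)$ with respect to the left-lex basis $\mathfrak{X}_2^n$. Expanding the product, each entry is either a diagonal coefficient $a_i$ (the coefficient of $x_i^2$, giving $n$ such entries) or an off-diagonal coefficient $a_i+a_j$ with $i<j$ (the coefficient of $x_ix_j$, giving $\binom{n}{2}$ such entries). This normal form for $J_{n,1}\bm a$ is exactly what the Macaulay representation of $J_{n,1}$ in \cref{prop: basic of pro} produces, so no further unpacking of the matrix is required.

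First I would partition $\{1,\ldots,n\}$ into three sets $\mathcal{P},\mathcal{N},\mathcal{Z}$ according to the sign of $a_i$, so that $|\mathcal{P}|=P$, $|\mathcal{N}|=N$, $|\mathcal{Z}|=Z$. The unordered off-diagonal pairs then split into six types according to which two of $\mathcal{P},\mathcal{N},\mathcal{Z}$ contain the two indices. Of these six, five have a sign forced by the obvious sign rule ($\mathcal{P}\mathcal{P}$ and $\mathcal{P}\mathcal{Z}$ are positive, $\mathcal{N}\mathcal{N}$ and $\mathcal{N}\mathcal{Z}$ are negative, $\mathcal{Z}\mathcal{Z}$ is zero), and only the ``mixed'' $\mathcal{P}\mathcal{N}$ pairs, of which there are exactly $PN$, can be positive, negative, or zero depending on the magnitudes of $a_i,a_j$.

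Tallying the forced contributions then gives the three bounds immediately. The entries guaranteed to be positive are the $P$ diagonal $a_i$ with $i\in\mathcal{P}$, the $\binom{P}{2}$ $\mathcal{P}\mathcal{P}$-pairs, and the $PZ$ $\mathcal{P}\mathcal{Z}$-pairs, giving
\[
P(J_{n,1}\bm a)\ge P+\binom{P}{2}+PZ=\tfrac{P(P+1)}{2}+PZ,
\]
and the count for negative entries is completely symmetric. For the zero count, the entries guaranteed to be zero are the $Z$ diagonal ones from $\mathcal{Z}$ together with the $\binom{Z}{2}$ $\mathcal{Z}\mathcal{Z}$-pairs, and the $PN$ mixed pairs contribute at most $PN$ additional zeros, yielding $Z(J_{n,1}\bm a)\le \tfrac{Z(Z+1)}{2}+PN$.

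When $PN=0$ the mixed class is empty, every off-diagonal entry is forced, and all three inequalities become equalities simultaneously. There is essentially no analytic obstacle here; the lemma is a pure bookkeeping statement. The only thing that needs care is to keep the equality case $PN=0$ cleanly separated from the strict inequality case, because it is the regime in which the estimate is tight and will presumably be invoked in the inductive estimates of $R(J_{n,d}\bm h)$ in the remainder of the paper.
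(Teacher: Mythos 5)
Your proof is correct and takes essentially the same approach as the paper: expand $(a_1x_1+\cdots+a_nx_n)(x_1+\cdots+x_n)$, classify coefficients by the signs of the underlying $a_i$, and count. The only cosmetic difference is that the paper obtains the bound on $Z(J_{n,1}\bm a)$ by subtracting the first two bounds from the total $\binom{n+1}{2}$, whereas you tally the guaranteed zeros directly; the two are equivalent.
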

		
		\begin{proof}
			
			The vector $J_{n,1}\bm{a}$ is the coordinate vector of the polynomial $(a_1x_1 + \cdots + a_nx_n)(x_1 + \cdots + x_n)$ with respect to the basis $\mathfrak{X}_2$. Due to the symmetry of $S_1$, we may assume, without loss of generality, that the first $P$ components of $\bm{a}$ are positive and the last $Z$ components are zero. This arrangement ensures that the coefficients of $x_ix_j$ are positive when $1 \leq i \leq j \leq P$, and the coefficients of $x_ix_{P + N + j}$ are also positive when $1 \leq i \leq P$ and $1 \leq j \leq Z$. Therefore, we have
			\[
			P(J_{n,1}\bm{a}) \geq \frac{P(P + 1)}{2} + PZ.
			\]
			When $N = 0$, $N(J_{n,1}\bm{a}) = 0$, and there are no other non-zero terms besides those listed above. Thus, the inequality is sharp.
			
			Similarly, we can prove that 
			\[N(J_{n,1}\bm{a}) \geq \frac{N(N + 1)}{2} + NZ.\]
			
			Finally, noting that $n = P + N + Z$, we have
			\[
			P(J_{n,1}\bm{a}) + N(J_{n,1}\bm{a}) + Z(J_{n,1}\bm{a}) = \frac{n(n + 1)}{2} = \frac{P(P + 1)}{2} + PZ + \frac{N(N + 1)}{2} + NZ + \frac{Z(Z + 1)}{2} + PN.
			\]
			Subtracting $P(J_{n,1}\bm{a}) + N(J_{n,1}\bm{a})$ from both sides and combining the inequalities above, we get the desired estimate of $Z(J_{n,1}\bm{a})$. This completes the proof of \cref{lem: counting ineq}.
		\end{proof}

		We can also estimate the counting of zeroes of the \textbf{second prolongation} of vectors in $\mathbb{R}^{n}$. This is a \textbf{Macaulay type} estimate.
		
		\begin{lemma}\label{lem: est sec pro}
			For $n \geq 2$, and any $\bm{a} \in \mathbb{R}^n$ such that $N(\bm{a}) \geq 2$, there exists an integer sequence $\{c(n)\}$ such that
			\begin{align*}
				Z(J_{n,2}J_{n,1}\bm{a}) \leq \binom{n + 1}{3} - nN(\bm{a}) + n + c(n),
			\end{align*}
			where $c(2) = 1$, $c(3) = 2$, $c(4) = 4$, and $c(n) = 10 - n$ for $n \geq 5$.
		\end{lemma}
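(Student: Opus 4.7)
The plan is to decompose $\bm{a} = \bm{a}^+ - \bm{a}^-$ into its non-negative and non-positive parts, with index supports $S^+, S^- \subseteq \{1,\ldots,n\}$ of sizes $P, N$, so that $A(x) = A^+(x) - A^-(x)$ is a difference of two linear forms with non-negative coefficients. Then
\[
A(x) S_1(x)^2 \;=\; A^+(x) S_1(x)^2 \;-\; A^-(x) S_1(x)^2,
\]
with each term on the right having non-negative coefficients. The zero entries of $J_{n,2}J_{n,1}\bm{a}$ accordingly split into \emph{trivial zeros} (monomials $x^\alpha \in P_3$ with $\mathrm{supp}(\alpha) \subseteq \{1,\ldots,n\}\setminus T$ where $T := S^+ \cup S^-$), which contribute exactly $\binom{Z+2}{3}$, and \emph{cancellation zeros}, which necessarily lie in the intersection of the degree-$3$ monomial ideals generated by $S^+$ and by $S^-$ (these being the only monomials where both $A^+S_1^2$ and $A^-S_1^2$ contribute).

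Next I would compute the coefficients of $A S_1^2$ explicitly:
\begin{align*}
[x_i^3]\,A(x) S_1(x)^2 &= a_i, \\
[x_i^2 x_j]\,A(x) S_1(x)^2 &= 2a_i + a_j \quad (i\neq j), \\
[x_i x_j x_k]\,A(x) S_1(x)^2 &= 2(a_i+a_j+a_k) \quad (\text{distinct } i,j,k).
\end{align*}
Cubes $x_i^3$ never cancel, while every cancellation of type $[x_i^2 x_j]$ forces $2a_i + a_j = 0$ with $i, j$ in opposite sign sets (same-sign combinations give strict inequalities), and every cancellation of type $[x_i x_j x_k]$ forces $a_i + a_j + a_k = 0$ with indices spanning both signs. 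Using the value-multiplicity function $f(c) := |\{i : a_i = c\}|$, I would bound the pair-cancellation count by $\sum_{i\in S^+} f(-2a_i) + \sum_{i\in S^-} f(-2a_i)$ and the triple-cancellation count by an analogous combinatorial sum over value classes, then estimate each via elementary inequalities in $P, N, Z$ exploiting the hypothesis $N \geq 2$.

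Combining the trivial-zero count $\binom{Z+2}{3}$ with the cancellation bound, I would compare the total against the target $\binom{n+1}{3} - nN + n + c(n)$. For $n = 2, 3, 4$ the constants $c(2) = 1$, $c(3) = 2$, $c(4) = 4$ can be read off from exhaustive inspection of the small number of extremal sign configurations, for instance $\bm{a} = (-1,\ldots,-1)$ and $\bm{a} = (-1,\ldots,-1,\kappa)$ with $\kappa$ chosen to maximize cancellations; for $n \geq 5$, the uniform $c(n) = 10 - n$ reflects a stabilization of the cancellation bookkeeping relative to the linear-in-$n$ growth of the ideal sizes. The main obstacle I anticipate is obtaining a uniformly sharp bound on the triple cancellation count, since $a_i + a_j + a_k = 0$ is a genuinely three-dimensional Diophantine condition whose solution count can be large for specially tuned $\bm{a}$; the hypothesis $N \geq 2$ is essential here, as it forces at least one opposite-sign interaction and prevents the degenerate configurations that would otherwise violate the stated bound.
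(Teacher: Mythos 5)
Your plan takes a genuinely different route from the paper. The paper proves this lemma by exploiting the Macaulay block representation
\[
J_{n,2}J_{n,1}\bm{a}=\begin{pmatrix}\bm a_0\\ 2J_{n-1,0}\bm a_0+\bm a_1\\ J_{n-1,1}J_{n-1,0}\bm a_0+2J_{n-1,1}\bm a_1\\ J_{n-1,2}J_{n-1,1}\bm a_1\end{pmatrix},
\]
reducing to the $(n-1)$-variable problem plus the auxiliary vectors $\bm b=2J_{n-1,0}\bm a_0+\bm a_1$, $\bm c=J_{n-1,0}\bm a_0+2\bm a_1$. The invertibility of $\bigl(\begin{smallmatrix}2&1\\1&2\end{smallmatrix}\bigr)$ forces $Z(\bm b)+Z(\bm c)\le n-1$, and a strong induction on $n$ (with a base case handled purely by Macaulay's estimate when $P(\bm a)=0$) closes the argument. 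You instead go for a direct coefficient-level count. Your explicit formulas for the coefficients of $A(x)S_1(x)^2$ are correct, and so is the split into \emph{trivial zeros} (those $\binom{Z+2}{3}$ monomials supported entirely on the zero set of $\bm a$) and \emph{cancellation zeros}, which necessarily mix a positive and a negative index.

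However, there is a genuine gap at the heart of your plan. You reduce the cancellation count to a bound of the form $\sum_{i\in S^{+}} f(-2a_i)+\sum_{i\in S^{-}} f(-2a_i)$ (for pairs) plus an analogous sum over value classes (for triples), where $f(c)=|\{i:a_i=c\}|$. This quantity depends on the actual \emph{values} of the $a_i$, not merely on $P,N,Z$, and you give no argument that it is controlled by $n$ and $N$ alone. For specially tuned $\bm a$ (lots of pairs with $a_i+a_j=0$, or an arithmetic structure making $a_i+a_j+a_k=0$ frequent) these sums can be as large as $\Theta(n^2)$, the same order as the target bound, so "elementary inequalities in $P,N,Z$" will not suffice without a new idea; the degree of freedom here is precisely what the paper's block-recursion argument avoids having to confront. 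You also haven't shown that the ad hoc constants $c(2),c(3),c(4)$ and the formula $c(n)=10-n$ emerge from your count; the paper gets them by analyzing a convex function $g(N)$ in the base case and by an inductive telescoping inequality (your "stabilization" heuristic is not a proof of $c(n)=10-n$). In short, the structural setup is sound and differs interestingly from the paper, but the core counting estimate — which is the entire content of the lemma — is not actually carried out, and as stated your proposed bound is not obviously a function of the allowed parameters.
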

		
		\begin{proof}
			Define
			$$L(n, N(\bm a)) := \binom{n + 1}{3} - nN(\bm a) + n.$$
			
			We only need to verify that  $c(n) \geq Z(J_{n,2}J_{n,1}\bm{a}) - L(n, N(\bm{a}))$.
			
			By definition,  $Z(J_{n,2}J_{n,1}\bm{a})$ is the number of zero coefficients of the homogeneous polynomial $(a_1x_1 + \cdots + a_nx_n)(x_1 + \cdots + x_n)^2$ with respect to the basis $\mathfrak{X}_3$.  
			Note that the counting function is invariant under coordinate permutations.
			Without loss of generality, assume the components of $\bm{a}$ are arranged in descending order. According to the Macaulay representation of $J_{n,2}$ and $J_{n,1}$ (see (\ref{equ: mac repr Jnd})), their composition is
			\begin{align*}
				J_{n,2}J_{n,1} = \begin{pmatrix}
					1 & 0 \\
					2J_{n - 1,0} & I \\
					J_{n - 1,1}J_{n - 1,0} & 2J_{n - 1,1} \\
					0 & J_{n - 1,2}J_{n - 1,1}
				\end{pmatrix}.
			\end{align*}
			Then $J_{n,2}J_{n,1}\bm{a}$ is
			\begin{align}\label{equ: 2nd pro}
				\begin{pmatrix}
					1 & 0 \\
					2J_{n - 1,0} & I \\
					J_{n - 1,1}J_{n - 1,0} & 2J_{n - 1,1} \\
					0 & J_{n - 1,2}J_{n - 1,1}
				\end{pmatrix}
				\begin{pmatrix}
					\bm{a}_0 \\
					\bm{a}_1
				\end{pmatrix}
				=
				\begin{pmatrix}
					\bm{a}_0 \\
					2J_{n - 1,0}\bm{a}_0 + \bm{a}_1 \\
					J_{n - 1,1}J_{n - 1,0}\bm{a}_0 + 2J_{n - 1,1}\bm{a}_1 \\
					J_{n - 1,2}J_{n-1,1}\bm{a}_1
				\end{pmatrix}.
			\end{align}
			
			Observe that after arranging the components of $\bm a$ in descending order, $\bm{a}_0 = a_0$ is the largest component of $\bm{a}$.

			We discuss cases based on $P(\bm{a})$.
			
			\textbf{Case I}: $N(\bm{a}) \geq 2$, $P(\bm{a}) = 0$.
			
			Denote the monomial subspace of $P_1$ generated by the monomials corresponding to the last $N(\bm{a})$ coefficients of $\mathfrak{X}_1\bm{a}$ as $A_1$, so $A_1^{ \langle 1 \rangle}$ is a $P_2$-monomial subspace. In this case, $Z:=Z(\bm{a}) = \mathrm{codim}(A_1)$. By the Macaulay estimate (\cref{thm: Mac estimate}),
			\begin{align*}
				Z(J_{n,2}J_{n,1}\bm{a}) = \mathrm{codim}((A_1^{\langle 1 \rangle})^{\langle 1 \rangle}) \leq (\mathrm{codim}(A_1^{\langle 1 \rangle}))^{\langle 2 \rangle} \leq ((\mathrm{codim}(A_1))^{\langle 1 \rangle})^{\langle 2 \rangle} = ((Z)^{\langle 1 \rangle})^{\langle 2 \rangle}.
			\end{align*}
			Since the $1$-Macaulay representation of $Z$ is $Z = \binom{Z}{1}$, so $((Z)^{\langle 1 \rangle})^{\langle 2 \rangle} = (\binom{Z + 1}{2})^{\langle 2 \rangle} = \binom{Z + 2}{3}$. 
			
			When $n=2$, it is direct to check that $Z(J_{n,2}J_{n,1}\bm{a})=0$, and $c(2)=1$ satisfies \cref{lem: est sec pro}.
			
			Fixing $n\geq 3$, consider the function of the variable $N:=N(\bm a)$:
			\begin{align*}
				g(N):=Z(J_{n,2}J_{n,1}\bm{a})-L(n,N)=	\binom{n - N + 2}{3} - L(n, N).
			\end{align*}
			We compute that 
			\begin{align*}
				\Delta g(N) := g(N+1)-g(N)=n - \binom{n - N + 1}{2} ,\quad 2 \leq N \leq n-1.
			\end{align*}
			It is easy to see that $ \Delta g(N) $ increases as $N$ increases, which  means that the maximum value of $g(N)$ must be achieved at endpoints $N = 2$ or $N = n$, i.e., 
			\begin{align*}
				g(N) \leq  \mathrm{max} \left\{ n - \binom{n}{2}, \binom{n}{2} - \binom{n}{3}\right \}, \quad n \geq 3.
			\end{align*}
			Then it is easy to check the following
			\begin{align*}
				c(n) \geq \mathrm{max} \left\{ n - \binom{n}{2}, \binom{n}{2} - \binom{n}{3}\right \}, \quad n \geq 3.
			\end{align*}
			This completes the proof of the \textbf{Case I}.
			
			We are on the way to  prove \cref{lem: est sec pro} by induction argument.
			
			When $n=2$, \textbf{Case I} already proved the \cref{lem: est sec pro}.
			
			Now, assume that for any $\bm{a}' \in \R^{n - 1}$ with $N(\bm{a}') \geq 2$ for   $n\geq 3$,
			\begin{align*}
				Z(J_{n - 1,2}J_{n - 1,1}\bm{a}') - L(n - 1, N(\bm{a}')) \leq c(n - 1).
			\end{align*}
			
			Based on \textbf{Case I}, it suffices to prove \cref{lem: est sec pro} for the remaining case as follows.

			\textbf{Case II: $N(\bm{a}) \geq 2$, $P(\bm{a}) \geq 1$ (this case occurs only when $n \geq 3$)}.
			
			Without loss of generality, we assume that  $\bm{a}_0 = 1$. 
			
			Define $\bm{b}=2J_{n - 1,0}\bm a_0 +\bm{a}_1$ and $\bm{c} = J_{n - 1,0}\bm a_0 + 2\bm{a}_1$. Since
			$
			\begin{pmatrix}
				2 & 1 \\
				1 & 2
			\end{pmatrix}
			$
			is an invertible matrix, the elements of the vectors $\bm{b}$ and $\bm{c}$ cannot be zero simultaneously at the same position. Therefore, $Z(\bm{b}) + Z(\bm{c}) \leq n - 1$. From (\ref{equ: 2nd pro}) and $\bm a_0=1$, we deduce that 
			\begin{align}\label{equ: 2nd pro zero ineq}
				Z(J_{n,2}J_{n,1}\bm{a}) \leq n - 1 - Z(\bm{c}) + Z(J_{n - 1,1}\bm{c}) + Z(J_{n - 1,2}J_{n - 1,1}\bm{a}_1).
			\end{align}
			
			Subdivide \textbf{Case II} into the following two sub-cases and verify the lemma for each:
			\begin{itemize}
				\item  \textbf{Case II-1: $N(\bm{a}) \geq 2$, $P(\bm{a}) \geq 1$, and  $N(\bm{a}_1) = n - 1$};
				\item \textbf{Case II-2: $ N(\bm{a}) \geq 2$, $P(\bm{a}) \geq 1$, and $N(\bm{a}_1)\leq  n - 2$}.
			\end{itemize}
			
			
			\textbf{Case II-1: $N(\bm{a}) \geq 2$, $P(\bm{a}) \geq 1$, and  $N(\bm{a}_1) = n - 1$}. 
			
			From  \cref{prop: basic of pro}, we know that $J_{n,d}$ has no negative elements and is non-zero in any row. Therefore, when $\bm{h} < 0$, we must have $J_{n,d}\bm{h} < 0$. So when $N(\bm{a}_1) = n - 1$, i.e., $\bm a_1<0$, we have  $Z(J_{n - 1,2}J_{n - 1,1}\bm{a}_1) = 0$. According to the third inequality in  \cref{lem: counting ineq}, 
			\[-Z(\bm{c}) + Z(J_{n - 1,1}\bm{c}) \leq Z(\bm{c})(Z(\bm{c}) - 1)/2 + N(\bm{c})P(\bm{c}).\] 
			Plugging the above estimate into (\ref{equ: 2nd pro zero ineq}) and using the Cauchy inequality for the product $N(\bm{c})P(\bm{c})$, we get that 
			\begin{align}\label{equ: 2nd pro est zero ineq-2}
				Z(J_{n,2}J_{n,1}\bm{a}) \leq n - 1 + \frac{Z(\bm{c})(Z(\bm{c}) - 1)}{2} + \frac{(n - 1 - Z(\bm{c}))^2}{4} = n - 1 + \frac{3Z(\bm{c})^2 - 2nZ(\bm{c}) + (n - 1)^2}{4}.
			\end{align}
			The function on $Z(\bm c)$ on the right hand side attains  the maximum value  at the left or right endpoint, $Z(\bm{c}) = 0$ or $Z(\bm{c}) = n - 1$. Direct computation shows   that the value at the left endpoint is $n - 1 + \frac{(n - 1)^2}{4}$, which is less than or equal to the value $\frac{n(n - 1)}{2}$ at the right endpoint.
			
			It is not difficult to verify that
			\begin{align*}
				c(n) \geq \binom{n}{2} - L(n, n - 1) \geq Z(J_{n,2}J_{n,1}\bm{a}) - L(n, n - 1), \quad n \geq 3.
			\end{align*}
			This completes the proof of \cref{lem: est sec pro} for \textbf{Case II-1}.
			
			\textbf{Case II-2: $ N(\bm{a}) \geq 2$, $P(\bm{a}) \geq 1$, and $N(\bm{a}_1)\leq  n - 2$, (this case occurs only when $n \geq 4$).}
			
			In this case,  $2\leq N:=N(\bm a)=N(\bm a_1) \leq n - 2$, $0\leq Z(\bm b), Z(\bm c)\leq n-2$, by similar discussions as in the beginning of \textbf{Case II}, we can see that $Z(\bm b)+Z(\bm c)\leq n-2$, i.e., $Z(\bm b)\leq n-2-Z(\bm c)$.
			
			We compute that 
			\begin{align}
				L(n, N) - L(n - 1, N) = \binom{n}{2} - N + 1 \geq  \binom{n-1}{2}+ 2.
				\label{eq5-6}
			\end{align}
			From (\ref{equ: 2nd pro zero ineq}) and the \textbf{inductive hypothesis}, similar with the computation in (\ref{equ: 2nd pro est zero ineq-2}), we obtain that 
			\begin{align*}
				Z(J_{n,2}J_{n,1}\bm{a}) &\leq n - 2 +\frac{3Z(\bm{c})^2 - 2nZ(\bm{c}) + (n - 1)^2}{4} +Z(J_{n - 1,2}J_{n - 1,1}\bm{a}_1)\\
				&\leq n-2+\mathrm{max} \left\{ \frac{(n - 1)^2}{4}, 1/4 + \frac{(n - 2)(n - 3)}{2} \right\} + L(n - 1, N(\bm{a}_1)) + c(n - 1).
			\end{align*}
			Since $(n - 1)^2/4 \leq 1+\delta_4^n + (n - 2)(n - 3)/2$, $\delta_4^n$ is the Kronecker delta, by noting the existence of (\ref{eq5-6}), we deduce that
			\[
			Z(J_{n,2}J_{n,1}\bm{a}) - L(n, N(\bm{a})) \leq n - 2+1+\delta_4^n + \binom{n - 2}{2} - \binom{n - 1}{2} - 2 + c(n - 1)   = c(n - 1) - 1+ \delta_4^n \leq c(n).
			\]
			This completes the proof of \textbf{Case II-2}.
			
			All cases have been verified, and the proof of \cref{lem: est sec pro} is complete.
		\end{proof}
		\section{Proof of \cref{thm: main 1}(B)}
		
		To prove \cref{thm: main 1} (B) (also Theroem \ref{thm:main-1-restate} (b)), it suffices to prove the case where $\bm{h}_2 \ngeq 0$. 
		In fact , since $R(A(x)S_1(x))$ is invariant under the permutation of the indices of $x_i$,  in \cref{thm: main 1} (B), where $2 = d < n$, without loss of generality, we assume that there exists a monomial with a negative coefficient does not contain $x_1$, i.e. $\bm{h}_2 \ngeq 0$.
		
		\begin{lemma}\label{lem: 3rank big 1}
			Let $n \geq 3$. Assume that  $J_{n,2}\bm{h} \geq 0$ and $\bm{h}_2 \ngeq 0$.  Then  we have the following estimate:
			$$P(\bm{h}_1),\ R(J_{n-1,0}\bm{h}_0 + \bm{h}_1),\ R(J_{n-1,1}\bm{h}_1 + \bm{h}_2) \geq 1.$$
		\end{lemma}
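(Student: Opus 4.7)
The plan is to exploit the decomposition of $J_{n,2}\bm h\geq 0$ provided by \cref{prop: nonneg crit}, namely
\[
\bm h_0\geq 0,\quad J_{n-1,0}\bm h_0+\bm h_1\geq 0,\quad J_{n-1,1}\bm h_1+\bm h_2\geq 0,\quad J_{n-1,2}\bm h_2\geq 0,
\]
together with the fact that the matrices $J_{n-1,d}$ have only nonnegative ($0/1$) entries (\cref{prop: basic of pro}). The central observation is that the hypothesis $\bm h_2\ngeq 0$ forces $\bm h_1$ to have at least one strictly positive entry; once this is established, the remaining two inequalities follow by short contradictions.

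For the first claim $P(\bm h_1)\geq 1$, I would argue by contradiction. If $\bm h_1\leq 0$, then $-\bm h_1\geq 0$ and, since $J_{n-1,1}$ has no negative entries, $J_{n-1,1}(-\bm h_1)\geq 0$. The inequality $J_{n-1,1}\bm h_1+\bm h_2\geq 0$ then gives $\bm h_2\geq -J_{n-1,1}\bm h_1\geq 0$, contradicting $\bm h_2\ngeq 0$.

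For the second claim, note that $J_{n-1,0}$ is the $(n-1)\times 1$ all-ones column, so $J_{n-1,0}\bm h_0$ has every entry equal to the scalar $\bm h_0\geq 0$. If $J_{n-1,0}\bm h_0+\bm h_1=0$, then $\bm h_1\leq 0$, contradicting the first claim just established.

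For the third claim, suppose $J_{n-1,1}\bm h_1+\bm h_2=0$, i.e.\ $\bm h_2=-J_{n-1,1}\bm h_1$. Applying $J_{n-1,2}$ and using $J_{n-1,2}\bm h_2\geq 0$ yields $J_{n-1,2}J_{n-1,1}\bm h_1\leq 0$. Writing $\bm h_1=(h_{1,2},\dots,h_{1,n})$, the vector $J_{n-1,2}J_{n-1,1}\bm h_1$ represents the polynomial $(h_{1,2}x_2+\cdots+h_{1,n}x_n)(x_2+\cdots+x_n)^2$, whose coefficient of $x_i^3$ equals $h_{1,i}$ for each $i\in\{2,\dots,n\}$ (the only contribution to $x_i^3$ comes from $h_{1,i}x_i$ times $x_i^2$). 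Hence $h_{1,i}\leq 0$ for every $i$, so $\bm h_1\leq 0$, again contradicting the first claim. The main obstacle is purely bookkeeping: one must correctly identify which coordinate of each $\bm h_j$ corresponds to which monomial of $A(x)$ in order to extract the $x_i^3$-coefficient argument cleanly; once this identification is made, each step reduces to a one-line sign chase using the nonnegativity of the entries of the $J_{n-1,d}$.
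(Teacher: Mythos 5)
Your proposal is correct, and claims one and two follow the paper essentially. The genuine difference is in the third claim. To show that $J_{n-1,2}J_{n-1,1}\bm h_1\leq 0$ forces $\bm h_1\leq 0$, the paper runs an induction on $n$ using the block structure of $J_{n,2}J_{n,1}$ from \eqref{equ: 2nd pro}: the top block immediately gives the sign of the first component, and the bottom block $J_{n-1,2}J_{n-1,1}\bm x'\leq 0$ hands off to the inductive hypothesis. You instead read off the coefficient of $x_i^3$ in $(h_{1,2}x_2+\cdots+h_{1,n}x_n)(x_2+\cdots+x_n)^2$, observe that it equals $h_{1,i}$, and conclude the sign of every component in one step. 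Your route is more elementary and self-contained (no induction, no appeal to the recursive block form of the second prolongation matrix), at the cost of being specific to degree $2$; the paper's inductive phrasing matches the recursive machinery it sets up in \cref{prop: basic of pro} and reuses elsewhere. Both are sound, and your coefficient-extraction observation is in fact the cleaner argument for this particular lemma.
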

		\begin{proof}
			The first statement is easy to prove by contradiction. Suppose $P(\bm{h}_1) = 0$. Then $\bm{h}_2 \geq -J_{n-1,1}\bm{h}_1 \geq 0$, which contradicts $\bm{h}_2 \ngeq 0$. Therefore, $P(\bm{h}_1) \geq 1$.
			
			Since $J_{n,2}\bm{h} \geq 0$, we have $R(J_{n-1,0}\bm{h}_0 + \bm{h}_1) = P(J_{n-1,0}\bm{h}_0 + \bm{h}_1)$. Also, because $\bm{h}_0 \geq 0$, we get $P(J_{n-1,0}\bm{h}_0 + \bm{h}_1) \geq P(\bm{h}_1) \geq 1$.
			
			If $R(J_{n-1,1}\bm{h}_1 + \bm{h}_2) = 0$, then $-J_{n,2}J_{n,1}\bm{h}_1 \geq 0$. We claim that $\bm{h}_1 \leq 0$ must hold in this case. We prove this claim by induction. For $n = 1$, it is obvious that $J_{1,2}J_{1,1}x \leq 0$ implies $x \leq 0$. Assume for any $\bm{x}' \in \R^{n-1}$, if $J_{n,2}J_{n,1}\bm{x}' \leq 0$, then $\bm{x}' \leq 0.$ Now let $\bm x\in \R^n$, denote $\bm{x}^t = (x_1, (\bm{x}')^t)^t \in \R^n$. From $J_{n,2}J_{n,1}\bm{x} \leq 0$ and (\ref{equ: 2nd pro}), we can deduce that $x_1 \leq 0$ and $J_{n-1,2}J_{n-1,1}\bm{x}' \leq 0$. By the inductive hypothesis, $\bm{x}' \leq 0$, so $\bm{x} \leq 0$.  Thus, the claim follows.
			
			Now, from $-J_{n,2}J_{n,1}\bm{h}_1 \geq 0$, we have $J_{n,2}J_{n,1}\bm{h}_1 \leq 0$. The above claim shows that $\bm{h}_1 \leq 0$, which contradicts $P(\bm{h}_1) \geq 1$. The proof of \cref{lem: 3rank big 1} is complete.
		\end{proof}
		
		\begin{corollary}\label{cor: est of rjn2h}
			Let $n \geq 3$. Assume that  $J_{n,2}\bm{h} \geq 0$ and $\bm{h}_2 \ngeq 0$. Then 
			\begin{align*}
				R(J_{n,2}\bm{h}) \geq 3 + R_{n-1,2}.
			\end{align*}
		\end{corollary}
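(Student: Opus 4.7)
The plan is to apply the decomposition \eqref{equ: jndh formula} with $d=2$,
\[
R(J_{n,2}\bm h) = R(\bm h_0) + R(J_{n-1,0}\bm h_0 + \bm h_1) + R(J_{n-1,1}\bm h_1 + \bm h_2) + R(J_{n-1,2}\bm h_2),
\]
and to bound each summand separately. Since $J_{n,2}\bm h\geq 0$ forces $J_{n-1,2}\bm h_2\geq 0$ by \cref{prop: nonneg crit} and $\bm h_2\ngeq 0$ by assumption, the vector $\bm h_2$ is an admissible competitor in the defining minimum for $R_{n-1,2}$, so the last term contributes at least $R_{n-1,2}$. It therefore suffices to show
\[
R(\bm h_0) + R(J_{n-1,0}\bm h_0+\bm h_1) + R(J_{n-1,1}\bm h_1+\bm h_2)\geq 3.
\]
\cref{lem: 3rank big 1} already secures $1+1=2$ for the middle two terms, so the remaining task is to squeeze out one extra unit by a case analysis on $\bm h_0$ and $\bm h_1$.

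The case $\bm h_0>0$ is immediate because $R(\bm h_0)=1$. When $\bm h_0=0$, the constraint forces $\bm h_1\geq 0$, hence $R(J_{n-1,0}\bm h_0+\bm h_1)=R(\bm h_1)=P(\bm h_1)\geq 1$ by \cref{lem: 3rank big 1}; if $P(\bm h_1)\geq 2$ the middle term jumps to at least $2$ and we are done. The only remaining case is $\bm h_0=0$ and $\bm h_1=a\,e_j$ with $a>0$ supported at a single coordinate $j$, and in this case I must upgrade $R(J_{n-1,1}\bm h_1+\bm h_2)\geq 1$ to $R(J_{n-1,1}\bm h_1+\bm h_2)\geq 2$.

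For that upgrade I would argue by contradiction. If $R(J_{n-1,1}\bm h_1+\bm h_2)=1$, the non-negativity $J_{n-1,1}\bm h_1+\bm h_2\geq 0$ forces $J_{n-1,1}\bm h_1+\bm h_2=c\,e_k$ for some $c>0$ and some position $k$; rearranging $\bm h_2=c\,e_k-J_{n-1,1}\bm h_1$ and applying $J_{n-1,2}$ gives
\[
J_{n-1,2}\bm h_2 \;=\; c\,J_{n-1,2}e_k\;-\;a\,J_{n-1,2}J_{n-1,1}e_j.
\]
Interpreting these via the prolongation map, $J_{n-1,2}e_k$ is the coordinate vector of $m_k\cdot(x_2+\cdots+x_n)$ (with $m_k$ the degree-$2$ monomial attached to $e_k$), whose support has size at most $n-1$, whereas $J_{n-1,2}J_{n-1,1}e_j$ is the coordinate vector of $x_{j'}(x_2+\cdots+x_n)^2$, whose support has size exactly $\binom{n}{2}$. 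Since $n-1<\binom{n}{2}$ for $n\geq 3$, the right-hand side is strictly negative at some monomial position, contradicting $J_{n-1,2}\bm h_2\geq 0$.

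The main obstacle is this last subcase, which is the only one in which every lower bound supplied by \cref{lem: 3rank big 1} is saturated, so one must genuinely exploit both $J_{n-1,1}\bm h_1+\bm h_2\geq 0$ and $J_{n-1,2}\bm h_2\geq 0$ at once. The support-cardinality comparison $\binom{n}{2}>n-1$ is the elementary device that closes it and keeps the argument independent of the heavier Macaulay-type machinery of \cref{lem: est sec pro}. Combining the three cases, the three middle terms always sum to at least $3$, and together with the $R_{n-1,2}$ contribution from the last term we obtain $R(J_{n,2}\bm h)\geq 3+R_{n-1,2}$ as required.
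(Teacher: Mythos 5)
Your proof is correct, and in the case $\bm h_0=0$ it takes a genuinely different route from the paper. Both proofs start from the same four-term decomposition of $R(J_{n,2}\bm h)$ and both invoke \cref{lem: 3rank big 1} and the fact that $\bm h_2$ is an admissible competitor in the minimum defining $R_{n-1,2}$. The difference is in how the case $\bm h_0=0$ is handled. The paper disposes of it in one stroke by citing Huang's lemma, \cite[Theorem 3.1(i)]{H1}: since $J_{n-1,2}\bm h_2\geq 0$ and $\bm h_2\ngeq 0$, that result gives $P(\bm h_2)\geq n-1$, hence $R(\bm\gamma_2)\geq P(\bm h_2)\geq n-1$ (as $J_{n-1,1}\bm h_1\geq 0$), so the three middle terms already contribute $0+1+(n-1)=n\geq 3$ and in fact the paper records the stronger estimate $R(J_{n,2}\bm h)\geq n+R_{n-1,2}$ in this subcase. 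You instead split further on $P(\bm h_1)$ and, in the critical subcase $\bm h_1=a\,e_j$, rule out $R(\bm\gamma_2)=1$ by a self-contained support-cardinality argument: writing $J_{n-1,2}\bm h_2=cJ_{n-1,2}e_k-aJ_{n-1,2}J_{n-1,1}e_j$ and noting $\bigl|\mathrm{supp}(J_{n-1,2}J_{n-1,1}e_j)\bigr|=\binom{n}{2}>n-1\geq\bigl|\mathrm{supp}(J_{n-1,2}e_k)\bigr|$ for $n\geq 3$ forces a negative entry, contradicting $J_{n-1,2}\bm h_2\geq 0$. Your argument is more elementary and avoids the external citation, at the cost of only reaching $3+R_{n-1,2}$ rather than $n+R_{n-1,2}$ in that branch; both conclusions suffice for the corollary as stated, and your device of comparing support sizes of iterated prolongations of a single monomial is in the same Macaulay-flavoured spirit the paper develops elsewhere.
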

		
		\begin{proof}
			We only need to consider two cases: $\bm{h}_0 = 0$ or $\bm{h}_0 \neq 0$.
			
			When $P(\bm{h}_0) = 0$, from $\bm{\gamma}_1 = J_{n-1,0}\bm{h}_0 + \bm{h}_1 \geq 0$, we know that $\bm{h}_1 \geq 0$. Since $\bm{h}_2$ has negative components, $\bm{h}_1$ cannot be zero. Also, because $J_{n-1,2}\bm{h}_2 \geq 0$,   \cite[Theorem 3.1(i)]{H1} tells us that $P(\bm{h}_2) \geq n - 1$. Therefore, we obtain that
			\begin{align*}
				R(J_{n,2}\bm{h}) \geq 0 + 1 + (n - 1) + R(J_{n-1,2}\bm{h}_2) = n + R_{n-1,2}.
			\end{align*}
			
			When $P(\bm{h}_0) = 1$, using Lemma \ref{lem: 3rank big 1}, we can conservatively estimate that
			$$R(J_{n,2}\bm{h}) \geq 1 + 1 + 1 + R(J_{n-1,2}\bm{h}_2) = 3 + R_{n-1,2}.$$
			This complete the proof of \cref{cor: est of rjn2h}.
		\end{proof}
		
		To prove part (B) of Theorem \cref{thm: main 1}, we first need to estimate $R_{2,2}$, $R_{3,2}$, $R_{4,2}$, and $R_{5,2}$. Now consider the case when $n = 2$. By taking the homogeneous polynomial in \cref{thm: Gao-Ng}  to be diagonal and the norm to be the standard Euclidean norm, we immediately get $R_{2,d} \geq 2$ for $d \geq 2$.

		Combining \cref{thm: Gao-Ng} with Corollary \ref{cor: est of rjn2h}, we have
		\begin{align*}
			R_{3,2} &\geq 3 + R_{2,2} \geq 5, \\
			R_{4,2} &\geq 3 + R_{3,2} \geq 8, \\
			R_{5,2} &\geq 3 + R_{4,2} \geq 11.
		\end{align*}
		
		This proves \cref{thm: main 1} (A) for $n = 3, 4, 5$ and $d = 2$. Moreover, the estimate of $R_{3,2}$ and $R_{4,2}$ are sharp, and their minimum values are achieved at the coordinate vectors of the following polynomials:
		\begin{align*}
			f &= \frac{1}{2}(x_1 - x_2 + x_3)^2 + \frac{1}{2}x_2^2 + \frac{1}{2}(x_1 + x_3)^2, \\
			g &= \frac{1}{2}(x_1 - x_2 - x_3 + x_4)^2 + \frac{1}{2}(x_1 + x_4)^2 + \frac{1}{2}(x_2 + x_3)^2.
		\end{align*}

		From Lemma \ref{lem: 3rank big 1}, we know $R(\bm{\gamma}_2) \geq 1$.
		For $n \geq 6$, we divide the case  where $J_{n,2}\bm{h} \geq 0$ and $\bm{h}_2 \ngeq 0$ into the following  three subclasses: 
		\begin{itemize}
			\item \textbf{Case I}: $\bm{h}_0 = 0$;
			\item \textbf{Case II}: $\bm{h}_0 = 1$ (if $\bm h_0\neq 0$, then $\bm h_0>0$, we can always assume $\bm h_0=1$ upto scaling by a positive constant) and either $N(\bm{h}_1) \leq 1$ or $R(\bm{\gamma}_2) \geq N(\bm{h}_1)$ holds;
			\item \textbf{Case III}: $\bm{h}_0 = 1$, $N(\bm{h}_1) \geq 2$, and $R(\bm{\gamma}_2) \leq N(\bm{h}_1) - 1$.
			
		\end{itemize}

		First, for the first two classes, we \textbf{claim} that the following inequality holds:
		\begin{align}\label{equ:}
			R(J_{n,2}\bm{h}) \geq n + R_{n-1,2}.
		\end{align}
		
		For \textbf{Case I}, the proof of the above inequality is exactly the same as the corresponding proof  in \cref{cor: est of rjn2h}. For  \textbf{Case II}, since $R(\bm{\gamma}_2) \geq 1$, we get that 
		\begin{align*}
			R(J_{n,2}\bm{h}) &\geq 1 + (n - 1 - N(\bm{h}_1)) + R(\bm{\gamma}_2) + R(J_{n-1,2}\bm{h}_2) \\
			&\geq n + R(J_{n-1,2}\bm{h}_2) \geq n + R_{n-1,2}.
		\end{align*}
		The claim follows.

		For \textbf{Case III}, we have the following estimate:
		\begin{align*}
			R(J_{n,2}\bm{h}) &\geq n - N(\bm{h}_1) + R(\bm{\gamma}_2) + R(J_{n-1,2}\bm{h}_2) \\
			&\geq n - N(\bm{h}_1) + R(\bm{\gamma}_2) + N(J_{n-1,2}J_{n-1,1}\bm{h}_1) \\
			&= n - N(\bm{h}_1) + R(\bm{\gamma}_2) + \binom{n + 1}{3} - P(J_{n-1,2}J_{n-1,1}\bm{h}_1) - Z(J_{n-1,2}J_{n-1,1}\bm{h}_1).
		\end{align*}
		
		The second inequality holds because $R(J_{n-1,2}\bm{h}_2) = P(J_{n-1,2}(\bm{\gamma}_2 - J_{n-1,1}\bm{h}_1)) \geq N(J_{n-1,2}J_{n-1,1}\bm{h}_1)$ and $\bm \gamma_2\geq 0$.
		
		Now $\bm{h}_1$ satisfies the conditions of Lemma \ref{lem: est sec pro}, so $Z(J_{n-1,2}J_{n-1,1}\bm{h}_1) \leq \binom{n}{3} - (n - 1) N(\bm{h}_1) + n - 1 + c(n - 1)$. Since $J_{n-1,2}J_{n-1,1}\bm{h}_1 \leq J_{n-1,2}\bm{\gamma}_2$, we have $P(J_{n-1,2}J_{n-1,1}\bm{h}_1) \leq P(J_{n-1,2}\bm{\gamma}_2)$.
		
		Because $\bm{\gamma}_2 \geq 0$, 
		according to \cref{thm: GHP}, $R(J_{n-1,2}\bm{\gamma}_2) \leq (n - 1)R(\bm{\gamma}_2) \leq (n - 1)( N(\bm{h}_1) - 1)$. Therefore,
		\begin{align*}
			R(J_{n,2}\bm{h}) &\geq n -  N(\bm{h}_1) + R(\bm{\gamma}_2) + \binom{n + 1}{3} - (n - 1)R(\bm{\gamma}_2) \\
			&\quad - \binom{n}{3} - (n - 1) +  N(\bm{h}_1)(n - 1) - c(n - 1) \\
			&\geq \binom{n}{2} - (10 - (n - 1)) + (n-2) N(\bm{h}_1) + 1 - (n - 2)( N(\bm{h}_1) - 1) \\
			&= \frac{n^2 + 3n}{2} - 12.
		\end{align*}
		In the last inequality, we used $c(n - 1) = 10 - (n - 1)$, which is correct for $n \geq 6$ according to  \cref{lem: est sec pro}. This gives the estimate for \textbf{Case III}. 
		
		Combining all the estimates for the three cases, we obtain that 
		\begin{equation}
			R(J_{n,2}\bm{h}) \geq \min\left\{n + R_{n-1,2}, \frac{n^2 + 3n}{2} - 12\right\}, \quad n \geq 6.
		\end{equation}
		
		Note that
		$$\frac{n^2 + 3n}{2} - 12 \geq \frac{n^2 + n}{2} - 6, \quad n \geq 6.$$
		
		Taking $R_{5,2} \geq 11 \geq (5^2 + 5)/2 - 6$ as the starting point for induction, it is easy to show that
		\begin{align*}
			R_{n,2} \geq \frac{n^2 + n}{2} - 6, \quad n \geq 6.
		\end{align*}
		\textbf{This completes the proof of \cref{thm: main 1} (B).}
		
		A simple calculation shows that the conjectured lower bound (\ref{eq1-2}) is of the order $O(n^{3/2})$, while the above result is of the order $O(n^2)$.
		
		\section{Proof of \cref{thm: main 1}(A)}
		In this section, we will present the proof of  \cref{thm: main 1}(A).
		It can be observed that when $d = 2$, the lower bound obtained in \cref{thm: main 1}(B) is not only numerically but also asymptotically larger than that in \cref{thm: main 1}(A). Unfortunately, when $d \geq 3$, it is extremely challenging to replicate the proof from the previous section. Brooks and Grundmeier \cite{B1} were able to provide an estimate when $N(\bm{h}) = 1$.
		
		\begin{proposition}[{\cite[Proposition 2]{B1}}]
			Suppose $N(\bm{h}) = 1$ and $J_{n,d}\bm{h} \geq 0$. Then $R(J_{n,d}\bm{h}) \geq \frac{n(n+1)}{2} - 1$.
		\end{proposition}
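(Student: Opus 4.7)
The plan is to extract positivity constraints from $J_{n,d}\bm{h}\ge 0$, identify a set of ``neighbor'' exponents around the unique negative multi-index that must appear in $\mathrm{supp}(A^+)$, and then bound $R(J_{n,d}\bm{h})$ by counting the size of their first prolongation. Concretely, write the unique negative entry of $\bm h$ as $-c<0$ at a multi-index $\alpha$ with $|\alpha|=d$, and set $A^+(x):=A(x)+cx^\alpha$, which then has only nonnegative coefficients. If $\alpha=d\cdot e_k$ were a pure power, the coefficient of $x_k^{d+1}$ in $A(x)S_1(x)$ would equal $-c$, contradicting $J_{n,d}\bm{h}\ge 0$; hence $S:=\mathrm{supp}(\alpha)$ has $s:=|S|\ge 2$. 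Inspecting the coefficient of $x^{\alpha+e_i}$ in $A(x)S_1(x)$, positivity forces, for each $i\in\{1,\ldots,n\}$, the existence of some $j_i\in S\setminus\{i\}$ with $\beta_i:=\alpha+e_i-e_{j_i}\in\mathrm{supp}(A^+)$. Let $T:=\{\beta_1,\ldots,\beta_n\}$.

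A short multiset check --- $\beta_i=\beta_{i'}$ reduces to $\{i,j_{i'}\}=\{i',j_i\}$, whose two witnessing matchings either give the trivial identification or require the forbidden $i=j_i$ --- shows the $\beta_i$ are pairwise distinct, so $|T|=n$. Since $A^+\ge 0$ coefficient-wise, $\mathrm{supp}(A^+\cdot S_1)=\mathrm{supp}(A^+)^{\langle 1\rangle}\supseteq T^{\langle 1\rangle}$, and outside $N_\alpha:=\{\alpha+e_i:1\le i\le n\}$ the coefficients of $A\cdot S_1$ and $A^+\cdot S_1$ agree. Therefore
$$R(J_{n,d}\bm{h})=|\mathrm{supp}(A\cdot S_1)|\ge |T^{\langle 1\rangle}\setminus N_\alpha|\ge |T^{\langle 1\rangle}|-n,$$
and it suffices to prove $|T^{\langle 1\rangle}|\ge (n^2+3n-2)/2$.

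To count $|T^{\langle 1\rangle}|$, I view it as the image of the map $(i,l)\mapsto \beta_i+e_l$, so $|T^{\langle 1\rangle}|=n^2-E$ where $E$ is the fibre excess. A coincidence $\beta_i+e_l=\beta_{i'}+e_{l'}$ translates to the $3$-element multiset equality $\{i,l,j_{i'}\}=\{i',l',j_i\}$; enumerating the six possible witnessing bijections and discarding those inconsistent with $i\ne j_i$ and $i'\ne j_{i'}$ leaves exactly the swap $(i',l')=(l,i)$ together with $j_i=j_l$ (and $i\ne l$) as the only nontrivial case. Consequently $E=\sum_{v\in S}\binom{|j^{-1}(v)|}{2}$ subject to $\sum_v |j^{-1}(v)|=n$ and $|j^{-1}(v)|\le n-1$ (since $v\notin j^{-1}(v)$). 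Convexity of $\binom{\cdot}{2}$ maximizes $E$ by concentrating all mass in a single bin, giving $E\le \binom{n-1}{2}$. Therefore $|T^{\langle 1\rangle}|\ge n^2-\binom{n-1}{2}=(n^2+3n-2)/2$, and
$$R(J_{n,d}\bm{h})\ge \frac{n^2+3n-2}{2}-n=\frac{n(n+1)}{2}-1,$$
as required.

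The hard part will be the collision enumeration: carefully verifying that, among all six bijections witnessing $\{i,l,j_{i'}\}=\{i',l',j_i\}$, only the swap survives the constraints $i\ne j_i$ and $i'\ne j_{i'}$. This is a small but error-prone case analysis; once settled, the convexity step is routine, and the whole argument stays purely combinatorial, in the spirit of the Macaulay-type counting used earlier in the paper.
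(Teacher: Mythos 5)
Your proof is correct. Note that the paper itself gives no proof of this proposition; it is quoted directly from Brooks--Grundmeier \cite[Proposition~2]{B1}, so there is nothing in the text to compare line by line. Your argument is a self-contained, purely combinatorial proof in exactly the Macaulay-counting spirit the paper uses elsewhere: the positivity of the coefficient of $x^{\alpha+e_i}$ in $AS_1$ forces a neighbor $\beta_i=\alpha+e_i-e_{j_i}\in\mathrm{supp}(A^+)$ for each $i$ (with $j_i\in S\setminus\{i\}$, which uses the correct preliminary observation that $\alpha$ cannot be a pure power); the multiset check that the $\beta_i$ are pairwise distinct is sound; the identity $\mathrm{supp}(A^+S_1)=\mathrm{supp}(A^+)^{\langle 1\rangle}$ and the observation that $AS_1$ and $A^+S_1$ agree off $N_\alpha$ correctly reduce the problem to bounding $|T^{\langle 1\rangle}|$; and the collision analysis of $(i,l)\mapsto\beta_i+e_l$ is right --- among the six witnessing bijections for $\{i,l,j_{i'}\}=\{i',l',j_i\}$, the ones involving a match $i\mapsto j_i$ or $j_{i'}\mapsto i'$ are ruled out, and the permutation $(23)$ collapses to the identity, leaving only the swap, so $E=\sum_{v\in S}\binom{|j^{-1}(v)|}{2}\le\binom{n-1}{2}$ by convexity together with $|j^{-1}(v)|\le n-1$ and $|S|\ge 2$. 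The arithmetic then gives $R(J_{n,d}\bm{h})\ge n^2-\binom{n-1}{2}-n=\frac{n(n+1)}{2}-1$ as required. This elementary counting route fits naturally alongside the paper's other Macaulay-type lemmas and is, as far as one can tell from the paper's own remarks, a different (and arguably simpler) path than the commutative-algebra machinery Brooks--Grundmeier develop.
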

		
		The remaining discussion is based on the condition $N(\bm{h}) \geq 2$.
		
		When $N(\bm{h}) = 2$, Brooks and Grundmeier insightfully used graded Betti numbers to characterize the cancellation relationship between positive and negative coefficients. However, due to computational difficulties in higher dimensions, their discussion was limited to $\C^3$. As the dimension and degree increase, many challenges arise. For example, when $d \geq n$, there exist cases where $N(\bm{h}_d) = 0$ regardless of how the coordinates are rotated, this poses difficulties for  using  induction. Therefore, we present the following illuminating proposition to help readers understand how the lower bound in \cref{thm: main 1} (A) is derived.
		
		\begin{proposition}
			When $n \geq 4$, if $\sum_{i=1}^{d-2}N(\bm{h}_i) + N(\bm{h}_d) = 0$, $P(\bm{h}_d) = N(\bm{h}_{d-1}) = 2$, and $J_{n,d}\bm{h} \geq 0$, then
			$$R(J_{n,d}\bm{h}) \geq 3n - 4.$$
			\label{prop4-1}
		\end{proposition}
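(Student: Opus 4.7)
I plan to split $R(J_{n,d}\bm h) = R(J_{n-1,d}\bm h_d) + T$ using (\ref{equ: jndh formula}), where $T := R(\bm h_0) + \sum_{i=1}^d R(\bm\gamma_i)$, and prove separately that $R(J_{n-1,d}\bm h_d) \geq 2n-3$ and $T \geq n-1$; adding these gives $3n-4$. The first is immediate from \cref{thm: GHP}: $\bm h_d \geq 0$ has $P(\bm h_d)=2$ and lives in the polynomial ring in $n-1 \geq 3$ variables, so for $n \geq 4$ we get $R(J_{n-1,d}\bm h_d) \geq 2(n-1) - \binom{2}{2} = 2n-3$.

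For $T \geq n-1$, write $\bm h_{d-1} = \bm h_{d-1}^+ + \bm h_{d-1}^-$ with $\bm h_{d-1}^-$ supported on the two negative monomials $m_1, m_2$, and let $S^-$ denote the support of $J_{n-1,d-1}\bm h_{d-1}^-$. The hypothesis produces two structural facts. Since $\bm h_d$ is supported on only $\{p_1, p_2\}$, $\bm\gamma_d \geq 0$ forces $J_{n-1,d-1}\bm h_{d-1} \geq 0$ outside $\{p_1,p_2\}$, so $\bm h_{d-1}^+$ must be rich enough that $J_{n-1,d-1}\bm h_{d-1}^+$ dominates $|J_{n-1,d-1}\bm h_{d-1}^-|$ on $S^-\setminus\{p_1,p_2\}$; in particular $\bm h_{d-1}^+ \neq 0$, since otherwise $\bm h_d$ would have to cover all of $S^-$, a set of size $\geq 2n-3 > 2$. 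Similarly, $\bm\gamma_{d-1}\geq 0$ together with $(\bm h_{d-1})_{m_j}<0$ forces $(J_{n-1,d-2}\bm h_{d-2})_{m_j}>0$, so $\bm h_{d-2}\neq 0$ and $R(\bm\gamma_{d-2})\geq R(\bm h_{d-2})\geq 1$.

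I would then split into cases by whether $m_1, m_2$ are adjacent --- that is, $m_i = x_{a_i} m_0$ for a common degree-$(d-2)$ monomial $m_0$ and distinct $a_1 \neq a_2$ --- or not. In the non-adjacent case $|S^-| = 2n-2$ and $\bm h_{d-2}$ must have at least two distinct positive entries to cover $m_1$ and $m_2$ from below, so the contributions from $\bm h_{d-1}^+$ and $\bm h_{d-2}$ easily yield $T \geq n-1$. The binding case is the adjacent one: $|S^-| = 2n-3$ (with the single overlap at $x_{a_1} x_{a_2} m_0$) and $\bm h_{d-2}$ may shrink to a single positive entry at $m_0$. Here one must pin down $\{p_1, p_2\}$ as the two ``unreachable'' positions of $S^-$ that cannot be produced from any non-negative monomial of $\bm h_{d-1}$ via $J_{n-1,d-1}$, and the remaining $2n-5$ positions in $S^-\setminus\{p_1,p_2\}$ force $\bm h_{d-1}^+$ to have many positive entries whose prolongation leaks outside $S^-$ and survives uncancelled in $\bm\gamma_d$.

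The main obstacle is the leakage count in the adjacent case: uniformly in $d\geq 3$ and in the shape of $m_0$, one has to show that $|\mathrm{supp}(J_{n-1,d-1}\bm h_{d-1}^+)\setminus S^-|+|\mathrm{supp}(\bm h_{d-1}^+)|$ is at least $n-2$ whenever $\bm h_{d-1}^+$ covers the $2n-5$ relevant positions of $S^-$, so that together with $R(\bm\gamma_{d-2}) \geq 1$ we reach $T\geq n-1$. A Macaulay-type lower bound on shadows, in the spirit of \cref{lem: counting ineq} applied to the monomial space spanned by $\mathrm{supp}(\bm h_{d-1}^+)$, looks like the right tool; the delicate point is checking that no extremal choice of $\bm h_{d-1}^+$ makes its prolongation collapse entirely inside $S^-$.
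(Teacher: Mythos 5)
Your decomposition via (\ref{equ: jndh formula}) and the bound $R(J_{n-1,d}\bm h_d)\geq 2n-3$ from \cref{thm: GHP} are exactly what the paper does, and your observation that $\bm h_{d-2}\neq 0$ and $R(\bm\gamma_{d-2})\geq R(\bm h_{d-2})\geq 1$ is also correct and matches the paper. However, for the remaining estimate $T\geq n-1$ you go down an unnecessarily hard path (splitting on whether $m_1,m_2$ are adjacent, tracking $S^-$, and invoking Macaulay-type shadow bounds) and you yourself flag the adjacent case as an unresolved obstacle. As written the proposal is therefore incomplete.

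The step you are missing is much simpler and makes the entire combinatorial case analysis superfluous. Since $\bm h_{d-2}\geq 0$ and $\bm h_{d-2}\neq 0$, \cref{thm: Gao-Ng} gives $P(J_{n-1,d-2}\bm h_{d-2})=R(J_{n-1,d-2}\bm h_{d-2})\geq n-1$. Because $\bm\gamma_{d-1}=J_{n-1,d-2}\bm h_{d-2}+\bm h_{d-1}\geq 0$ and adding $\bm h_{d-1}$ can destroy at most $N(\bm h_{d-1})=2$ of those positive coordinates, we get $R(\bm\gamma_{d-1})=P(\bm\gamma_{d-1})\geq (n-1)-2=n-3$, with no reference at all to the positive part $\bm h_{d-1}^+$ or to the shape of $m_1,m_2$. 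Likewise $\bm h_{d-1}\neq 0$ gives $R(J_{n-1,d-1}\bm h_{d-1})\geq n-1>2=R(\bm h_d)$, so $R(\bm\gamma_d)\geq (n-1)-2\geq 1$. Together with $R(\bm\gamma_{d-2})\geq 1$ this already yields $T\geq 1+(n-3)+1=n-1$. In short, the rank you were trying to extract by counting leakage of $J_{n-1,d-1}\bm h_{d-1}^+$ in $\bm\gamma_d$ is actually supplied, more cheaply and uniformly, by the surviving positives of $J_{n-1,d-2}\bm h_{d-2}$ inside $\bm\gamma_{d-1}$.
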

		
		\begin{proof}
			From Theorem \ref{thm: Gao-Ng}, we obtain the estimate
			$$R(J_{n-1,d-1}\bm{h}_{d-1}) \geq n - 1 \geq 3 > P(\bm{h}_d).$$
			Therefore, $R(J_{n-1,d-1}\bm{h}_{d-1} + \bm{h}_d) \geq 1$, and $P(\bm{h}_{d-2}) \geq 1$. Otherwise, $\bm{h}_{d-1} \geq 0$, which contradicts the given conditions. Using the Macaulay representation of $J_{n,d}$, we have
			\begin{align*}
				&R(J_{n,d}\bm{h}) = R(\bm{h}_0) + \sum_{i=1}^d R(J_{n-1,i-1}\bm{h}_{i-1} + \bm{h}_i) + R(J_{n-1,d}\bm{h}_d) \\
				\geq & R(J_{n-1,d-3}\bm{h}_{d-3} + \bm{h}_{d-2}) +
				R(J_{n-1,d-2}\bm{h}_{d-2} + \bm{h}_{d-1}) +
				R(J_{n-1,d-1}\bm{h}_{d-1} + \bm{h}_d) + 
				R(J_{n-1,d}\bm{h}_d) \\
				\geq & R(\bm{h}_{d-2}) + R(J_{n-1,d-2}\bm{h}_{d-2} + \bm{h}_{d-1})
				+ R(J_{n-1,d-1}\bm{h}_{d-1} + \bm{h}_d) + 
				R(J_{n-1,d}\bm{h}_d) \\
				\geq & 1 + (n - 1 - 2) + 1 + 2(n - 1) - \frac{2 \times 1}{2} = 3n - 4.
			\end{align*}
			Where the second inequality use the assumption that $N(\bm h_{d-3})=0$, and the third inequality uses \cref{thm: Gao-Ng}  and  \cref{thm: GHP}.
		\end{proof}

		\subsection{Estimation of the alternating sum in simple cases}
		To remove the additional conditions in the above proposition, we need to perform more refined calculations. Beyond the crude calculations , we note that constructing an easily understandable proof for the following lemmas is quite challenging. However, we emphasize that skipping these calculations and proceeding directly to next subsection  should not hinder your understanding of our main proof.
		\begin{lemma}\label{lem: estimate on jnd for gamma rank 1}
			For $n, d \geq 2$, using the same notations as above, if $N(\bm{h}) \geq 1$ and $J_{n,d}\bm{h} \geq 0$, and the $(d + 1)$-dimensional vector $(R(\bm{\gamma}_0), \cdots, R(\bm{\gamma}_d))^t$ has only one non-zero component, then
			\begin{align*}
				R(J_{n,d}\bm{h}) \geq \binom{n+1}{3}+1.
			\end{align*}
		\end{lemma}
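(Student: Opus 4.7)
The strategy is to exploit formula (\ref{equ: jnd in gamma}) in this strongly degenerate regime where exactly one block $\bm{\gamma}_k$ of $\bm{\gamma}$ is non-zero. In that case the alternating sum on the right of (\ref{equ: jnd in gamma}) collapses to the single term $(-1)^{d-k} J_{n-1,d}J_{n-1,d-1}\cdots J_{n-1,k}\bm{\gamma}_k$, so that
\[
R(J_{n,d}\bm{h}) \;=\; R(\bm{\gamma}_k) + R\bigl(J_{n-1,d}J_{n-1,d-1}\cdots J_{n-1,k}\bm{\gamma}_k\bigr).
\]
The remainder of the plan is to identify the admissible values of $k$ and to lower bound the two pieces on the right.

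Next, I would pin down the range of $k$. By \cref{prop: basic of pro}, each $J_{n-1,i}$ has non-negative entries with at least one non-zero entry in every column, hence sends a non-zero non-negative vector to a non-zero non-negative vector. Since $\bm{\gamma}_k \geq 0$ is non-zero, $J_{n-1,d}\cdots J_{n-1,k}\bm{\gamma}_k$ is then non-zero and non-negative, and the hypothesis $J_{n-1,d}\bm{h}_d \geq 0$ (itself a consequence of $J_{n,d}\bm{h}\geq 0$ via \cref{prop: nonneg crit}) forces the sign factor $(-1)^{d-k}$ to equal $+1$; that is, $d-k$ is even. Moreover, the case $k = d$ is excluded by $N(\bm{h}) \geq 1$: unwinding $\bm{\gamma}_i = J_{n-1,i-1}\bm{h}_{i-1} + \bm{h}_i = 0$ inductively for $i = 0, 1, \ldots, d-1$ yields $\bm{h}_0 = \cdots = \bm{h}_{d-1} = 0$, and then $\bm{h}_d = \bm{\gamma}_d \geq 0$, giving $\bm{h}\geq 0$ and contradicting $N(\bm{h}) \geq 1$. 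Combining, $0 \leq k \leq d-2$ and $d - k \geq 2$.

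Finally, I would bound the two pieces. Trivially $R(\bm{\gamma}_k) \geq 1$. For the second piece, $J_{n-1,d}\cdots J_{n-1,k}\bm{\gamma}_k$ is the coordinate vector of the polynomial $\tilde{\gamma}_k(x_2,\ldots,x_n)\,(x_2+\cdots+x_n)^{d+1-k}$, where $\tilde{\gamma}_k$ is the polynomial in $n-1$ variables corresponding to $\bm{\gamma}_k$. Fixing any monomial $m$ of degree $k$ whose coefficient in $\tilde{\gamma}_k$ is positive, the single term $m\cdot(x_2+\cdots+x_n)^{d+1-k}$ already places strictly positive coefficients on every monomial $m\cdot x^{\beta}$ with $|\beta| = d+1-k$ supported on $\{x_2,\ldots,x_n\}$, and the remaining contributions from other monomials of $\tilde{\gamma}_k$ are non-negative and cannot cancel them. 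This yields
\[
R\bigl(J_{n-1,d}\cdots J_{n-1,k}\bm{\gamma}_k\bigr) \;\geq\; \binom{(n-1)-1+(d+1-k)}{d+1-k} \;=\; \binom{n+d-k-1}{d+1-k},
\]
which over the admissible range $d-k \geq 2$ is minimized at $d-k=2$, where it equals $\binom{n+1}{3}$. Adding $R(\bm{\gamma}_k)\geq 1$ gives $R(J_{n,d}\bm{h}) \geq \binom{n+1}{3}+1$.

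The delicate point in the plan is the combined parity-plus-range restriction on $k$: both the parity argument (which uses $J_{n-1,d}\bm{h}_d \geq 0$ together with the positivity structure of the matrices $J_{n-1,i}$) and the exclusion of $k = d$ (which uses $N(\bm{h}) \geq 1$) are essential, and the resulting bound is saturated precisely at $k = d-2$, so there is no slack to spare in either step.
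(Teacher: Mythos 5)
Your proof is correct and follows essentially the same route as the paper: reduce to the formula $R(J_{n,d}\bm{h}) = R(\bm{\gamma}_k) + R(J_{n-1,d}\cdots J_{n-1,k}\bm{\gamma}_k)$, use positivity of the iterated prolongation together with $J_{n-1,d}\bm{h}_d \geq 0$ to force $d-k$ even, rule out $k=d$ via $N(\bm{h})\geq 1$, and then lower bound the support of a single positive monomial times $(x_2+\cdots+x_n)^{d+1-k}$, minimized at $d-k=2$. Your write-up is a bit more explicit than the paper's (in particular spelling out why the iterated prolongation of a nonzero nonnegative vector stays nonzero), but the argument is the same.
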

		
		\begin{proof}
			Suppose $R(\bm{\gamma}_a) \geq 1$ and $R(\bm{\gamma}_i)=0$ for $0\leq i\leq d, i\neq a\leq d$. 
			From (\ref{equ: jnd in gamma}), we have 
			\begin{align}\label{equ: est gamma small-1}
				R(J_{n,d}\bm{h}) \geq 1 + R((-1)^{d-a}J_{n - 1, d}\cdots J_{n-1, a}\bm{\gamma}_a).
			\end{align}
			
			We first claim  that 
			\begin{align}\label{equ: est gamma small-2}
				R(J_{n - 1, d}\cdots J_{n - 1, a}\bm{\gamma}_a)\geq |\mbox{supp}((x_2 + \cdots + x_n)^{d - a + 1})|=\binom{n+d-a-1}{d-a+1}.    
			\end{align}
			In fact, let $f(x_2, \cdots, x_n) = \mathfrak{X}_a\bm{\gamma}_a = \sum_j a_j x^{I_j}$, where $a_j > 0$, $|I_j| = a$, and $1 \leq j \leq R(\bm{\gamma}_a)$. Then 
			\[R(J_{n - 1, d}\cdots J_{n - 1, a}\bm{\gamma}_a)=|\mbox{supp}(f(x_2, \cdots, x_n)(x_2 + \cdots + x_n)^{d - a + 1})|\geq |\mbox{supp}(a_1x^{I_1}(x_2 + \cdots + x_n)^{d - a + 1})|.\] 
			This proves the claim.
			
			Moreover, we claim that $d - a \geq 2$. Combining this with  (\ref{equ: est gamma small-1}) and (\ref{equ: est gamma small-2}) completes the proof of Lemma~\ref{lem: estimate on jnd for gamma rank 1}. Actually, the inequalities $\bm{\gamma}_a \geq 0$ and $(-1)^{d - a}J_{n - 1, d}\cdots J_{n - 1, a}\bm{\gamma}_a \geq 0$ imply that $(-1)^{d - a}=1$. Consequently, $d - a$ is even. If $d = a$, we obtain $\bm{\gamma}_0=\cdots=\bm{\gamma}_{d - 1}=0$. This, in turn, implies that $\bm{h}_0=\cdots=\bm{h}_{d - 1}=0$. Thus, $\bm{h}_d\geq 0$, which contradicts the condition $N(\bm{h})\geq 1$. Therefore, we conclude that $d - a\geq 2$.
		\end{proof}
		
		\begin{lemma}\label{lem: est gamma rank 2}
			For $n, d \geq 2$, if the non-negative vector $\bm{v}$ is a linear combination of two vectors $\bm{\gamma}_a, \bm{\gamma}_b \geq 0$ of different dimensions after multiple prolongations, i.e.,
			\begin{align*}
				\bm{v} = (-1)^{d - a}J_{n,d}J_{n,d - 1}\cdots J_{n,a}\bm{\gamma}_a + (-1)^{d - b}J_{n,d}J_{n,d - 1}\cdots J_{n,b}\bm{\gamma}_b,
			\end{align*}
			where $0 \leq a < b \leq d$ and $R(\bm{\gamma}_a) = R(\bm{\gamma}_b) = 1$, then
			\begin{align*}
				R(\bm{v}) \geq \binom{n + 2}{3} - 1.
			\end{align*}
		\end{lemma}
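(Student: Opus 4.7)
The plan is to leverage the rank-one hypothesis to write $\bm v$ as an explicit polynomial and then case-split by the parity of $b-a$. Since $\bm\gamma_a,\bm\gamma_b\ge0$ each have rank one, $\bm\gamma_a$ is the coordinate vector of $c_ax^{I_a}$ and $\bm\gamma_b$ that of $c_bx^{I_b}$ for some $c_a,c_b>0$, $|I_a|=a$, $|I_b|=b$. Setting $e:=d-a+1$, $f:=d-b+1$, and $m:=b-a=e-f$, the vector $\bm v$ is the coordinate vector of
\[
v(x)=(-1)^{d-a}c_a\,x^{I_a}S_1^e+(-1)^{d-b}c_b\,x^{I_b}S_1^f.
\]
The case $d-a=1$ (which forces $b=d$) is ruled out: there $\bm v=-J_{n,d}\bm\gamma_a+\bm\gamma_b$, but $\bm\gamma_b$ has a single positive entry while $J_{n,d}\bm\gamma_a$ has $n$, so some entry of $\bm v$ is $-c_a<0$. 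The sign pattern $(-1)^{d-a},(-1)^{d-b}=(-,-)$ makes all coefficients of $v$ non-positive but $\bm v\neq 0$; the pattern $(-,+)$ would force $I_b\le I_a$ coordinate-wise, impossible since $|I_b|>|I_a|$. Hence $d-a$ is even and $e\ge 3$, leaving only Case A ($b-a$ even, both signs $+$) and Case B ($b-a$ odd, signs $(+,-)$).

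In Case A, $v$ has non-negative coefficients, so $\mathrm{supp}(\bm v)\supseteq I_a+\{J\in\N^n:|J|=e\}$ and $R(\bm v)\ge\binom{n+e-1}{e}\ge\binom{n+2}{3}$. In Case B, $v=c_ax^{I_a}S_1^e-c_bx^{I_b}S_1^f$; if $I_a\not\le I_b$, choose $j$ with $(I_a)_j>(I_b)_j$ and any $K\in\N^n$ with $|K|=f$, $K_j=0$ (possible since $n\ge 2$), so that $\beta:=I_b+K\not\ge I_a$ and the coefficient of $x^\beta$ in $v$ is $-c_b\binom{f}{K}<0$, contradicting $\bm v\ge 0$. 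Hence $I_a\le I_b$; set $L:=I_b-I_a$, $|L|=m$. The coefficient of $x^{I_a+\alpha}$ in $v$ equals $c_a\binom{e}{\alpha}-c_b\binom{f}{\alpha-L}$ (the second term being $0$ when $\alpha\not\ge L$), so the only exponents that may vanish lie in $I_b+\{K:|K|=f\}$ and are exactly those with $c_b/c_a=r_K:=\binom{e}{L+K}/\binom{f}{K}$; non-negativity of $\bm v$ enforces $c_b/c_a\le\min_K r_K$.

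To bound the number of vanishing coefficients in Case B, split by $m$. When $m=1$, $L=e_j$ gives $r_K=e/(K_j+1)$, minimized \emph{uniquely} on $\{K\ge 0:|K|=f\}$ at $K=fe_j$ with minimum $e/(f+1)=1$; hence at most one coefficient vanishes and $R(\bm v)\ge\binom{n+e-1}{e}-1\ge\binom{n+2}{3}-1$. When $m\ge 3$, the trivial observation that vanishing coefficients all lie in $I_b+\{K:|K|=f\}$, a set of size $\binom{n+f-1}{f}$, gives $R(\bm v)\ge g(f):=\binom{n+m+f-1}{m+f}-\binom{n+f-1}{f}$; the required estimate $g(f)\ge\binom{n+2}{3}-1$ (for $n\ge 2$, $f\ge 0$, $m\ge 3$) follows because $g$ is non-decreasing in $f$ (Pascal's rule gives $g(f+1)-g(f)=\binom{n+m+f-1}{n-2}-\binom{n+f-1}{n-2}\ge 0$) with base case $g(0)=\binom{n+m-1}{m}-1\ge\binom{n+2}{3}-1$ (as $k\mapsto\binom{n+k-1}{k}$ is increasing for $n\ge 2$).

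The main obstacle is the interlocking sign/support analysis in Case B---in particular, deducing $I_a\le I_b$ coordinate-wise from $\bm v\ge 0$ and, in the subcase $m=1$, pinpointing the uniqueness of the minimizer of $r_K$; the subcase $m\ge 3$ reduces to the binomial monotonicity of $g$, which I expect to be routine.
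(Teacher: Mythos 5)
Your proof is correct and follows essentially the same route as the paper's: both reduce to an explicit two-monomial polynomial identity, rule out a negative leading sign by a support-size comparison, and then split Case B into $b-a=1$ versus $b-a\ge 3$, handling the former by the multinomial-ratio computation $r_K=e/(K_j+1)$ with its unique minimizer $K=fe_j$ (hence at most one cancellation) and the latter by the crude bound $\binom{n+e-1}{e}-\binom{n+f-1}{f}$. The only difference is cosmetic bookkeeping: you parametrize by $(e,f,m)$ and verify the binomial inequality via monotonicity of $g(f)$ in $f$ plus the base case $g(0)$, while the paper writes $a=d-2l$, $b=d-2t+1$ and telescopes via Pascal's rule.
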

		
		\begin{proof}
			We first show that $(-1)^{d - a} = 1$. Since $R(\bm{\gamma}_a) = R(\bm{\gamma}_b) = 1$, it is not difficult to find that 
			\[J_{n,d}J_{n,d - 1}\cdots J_{n,a}\bm{\gamma}_a=|\mbox{supp}((x_1 + \cdots + x_n)^{d - a + 1})|=\binom{n + d - a}{d - a + 1}.\] 
			Because $a < b$, we have 
			\[R(J_{n,d}J_{n,d - 1}\cdots J_{n,a}\bm{\gamma}_a) > R(J_{n,d}J_{n,d - 1}\cdots J_{n,b}\bm{\gamma}_b).\]
			If $(-1)^{d - a} = -1$, then $\bm{v}$ must have negative components, contradicting the condition.
			
			In what follows, we continue the proof by considering two cases based on the sign of $(-1)^{d - b}$.
			
			
			\textbf{Case I:  $(-1)^{d - b} = 1$.}  In this case, the second term in  $\bm \nu$ has non-negative coefficients, so we have that 
			\[R(\bm{v}) \geq \binom{n + d - a}{d - a + 1} = \binom{n + d - a}{n - 1}.\] 
			Since $d - a$ is an even number and $a < d$, we get that 
			\begin{align}\label{equ: est lambda1}R(\bm{v}) \geq \binom{n + 2}{n - 1} = \binom{n + 2}{3}.\end{align}
			
			\textbf{Case II:   $(-1)^{d - b} = -1$.} Without loss of generality,  we assume the only non-zero component in $\bm{\gamma}_a$ is 1. We have two polynomials  $x^\alpha := \mathfrak{X}_a\bm{\gamma}_a$ and $\lambda x^\beta := \mathfrak{X}_b\bm{\gamma}_b$ for some $\lambda>0$.
			According to the definition of the prolongation map,  we obtain that 
			\[P(x) := \mathfrak{X}_d\bm{v} = x^\alpha(x_1 + \cdots + x_n)^{d - a + 1} - \lambda x^\beta(x_1 + \cdots + x_n)^{d - b + 1}.\]
			Note that  $|\mbox{supp}(P(x))|=R(\bm{v})$.
			
			Let $a = d - 2l$ and $b = d - 2t + 1$, where $l \geq t \geq 1$. Direct computation yields that 
			\begin{align*}
				P(x) &= x^\alpha(x_1 + \cdots + x_n)^{2l + 1} - \lambda x^\beta(x_1 + \cdots + x_n)^{2t} \\
				&= x^\alpha\sum_{|I| = 2l + 1} \frac{(2l + 1)!}{I!}x^I - \lambda x^\beta\sum_{|J| = 2t} \frac{(2t)!}{J!}x^J.
			\end{align*}
			Here $I! := (i_1)!\cdots(i_n)!$ if $I = (i_1, \cdots, i_n)$.
			
			The discussion on \textbf{Case II} will be divided into the following two subcases: 
			\begin{itemize}
				\item \textbf{Case II-1: $l\geq t+1$};
				\item \textbf{Case II-2: $l=t$}.
			\end{itemize} 	
			
			\textbf{Case II-1: $l \geq t + 1$.} It is obvious that 
			\begin{align}\label{equ: est lamnda2}
				|\mathrm{supp}(P(x))| &\geq \binom{n + 2l}{2l + 1} - \binom{n + 2t - 1}{2t} \geq \binom{n + 2t + 2}{n - 1} - \binom{n - 2t - 1}{n - 1} \\
				&\geq \binom{n + 1}{3} + \binom{n + 2}{4} + \binom{n + 3}{5} \geq \binom{n + 2}{3} - 1.\notag
			\end{align}
			
			\textbf{Case II-2: $l = t$.} 
			Let $\{e_j:1\leq j\leq n\}$ be the standard basis of $\R^n$, i.e., $e_j=(0,\cdots,0,1,0,\cdots,0)$ where only the $j$-th place is $1$, and other places are zero. 
			Take $I = 2te_i = (0, \cdots, 2t, 0, \cdots, 0)$. Since $P(x)$ has no negative coefficients, there exists an index $f_i = f_i^j e_j$ such that $f_i + \alpha = \beta + 2t e_i$. Then the components of the multi-index satisfy the equations
			\begin{align*}
				\begin{cases}
					f_i^i + \alpha^i - \beta^i = 2t, \quad 1 \leq i \leq n, \\
					f_i^j + \alpha^j - \beta^j = 0, \quad j \neq i.
				\end{cases}
			\end{align*}
			
			Therefore, $f_i^i - f_j^i = 2t$ for $i \neq j$, which is equivalent to saying that there exists an index $1 \leq k \leq n$ such that $f_i = 2te_i + e_k$, which is equivalent to
			$$
			\beta = \alpha + e_k, \quad 1 \leq k \leq n.
			$$
			When $J + e_k = I$, from $\bm{v} \geq 0$, we get
			\begin{align}\label{equ: lambda}
				\frac{(2l + 1)!}{I!} - \lambda \frac{(2t)!}{J!} \geq 0, |J|=2t.
			\end{align}
			
			The above equation is equivalent to
			$$\lambda \leq (2t + 1)\frac{J!}{I!} = \frac{2t + 1}{j_k + 1},|J|=2t,$$
			where $j_k$  ranges from $0$ to $2t$.
			Thus, $\lambda \leq 1$, and (\ref{equ: lambda}) holds with equality only when $J = 2te_k$. This means
			\begin{align}\label{equ: est lambda 3}
				R(v) = |\mathrm{supp}(P(x))| \geq \binom{n + 2t}{2t + 1} - 1 \geq \binom{n + 2}{3} - 1.
			\end{align}
			Combining (\ref{equ: est lambda1}), (\ref{equ: est lamnda2}) and (\ref{equ: est lambda 3}), we complete the proof of \cref{lem: est gamma rank 2}.
		\end{proof}

		\subsection{The final estimate}

		Proving \cref{thm: main 1}(A) is equal to prove part (a) of \cref{thm:main-1-restate}, now we classify the case  where $J_{n,d}\bm{h} \geq 0$ and $N(\bm{h}) \geq 2$ into six subcases based on the numerical characteristics of the slack variables $\bm{\gamma}$ and $\bm{h}_d$:
		\begin{align*}
			&\textbf{I}: \bm{h}_d = 0. 
			&& \qquad \textbf{II-1}: \bm{h}_d \ngeq 0, R(\bm{\gamma}) \geq 3 
			&& \qquad \textbf{III-1}: \bm{h}_d \geq 0, P(\bm{h}_d) \geq 3, R(\bm{\gamma}) \geq 3. \\
			& 
			&& \qquad \textbf{II-2}: \bm{h}_d \ngeq 0, R(\bm{\gamma}) \leq 2. 
			&& \qquad \textbf{III-2}: \bm{h}_d \geq 0, \bm{h}_d \neq 0, R(\bm{\gamma}) \leq 2. \\
			& 
			&& 
			&& \qquad \textbf{III-3}: \bm{h}_d \geq 0, \bm{h}_d \neq 0, P(\bm{h}_d) \leq 2.
		\end{align*}
		
		We will compute $R(J_{n,d}\bm{h})$ for each of these six categories. The estimates for the first five categories are relatively straightforward, while \textbf{III-3} requires the introduction of patch vectors.
		
		\textbf{I}: $\bm{h}_d = 0$. In this case, the polynomial corresponding to $\bm{h}$ is divisible by a monomial of degree at least one. Therefore,
		\begin{align*}
			R(J_{n,d}\bm{h}) \geq R_{n,d-1}.
		\end{align*}
		
		\textbf{II-1}: $\bm{h}_d \ngeq 0$, $R(\bm{\gamma}) \geq 3$.
		\begin{align*}
			R(J_{n,d}\bm{h}) = R(\bm{\gamma}) + R(J_{n-1,d}\bm{h}_d) \geq 3 + R_{n-1,d}.
		\end{align*}
		
		\textbf{II-2}: $\bm{h}_d \ngeq 0$, $R(\bm{\gamma}) \leq 2$.
		
		There are two possibilities:
		\begin{itemize}
			\item [1.] Only one component of the vector $(R(\bm{\gamma}_0), \cdots, R(\bm{\gamma}_d))^t$ is non-zero. This case has already been studied in \cref{lem: estimate on jnd for gamma rank 1}, and it was shown that $R(J_{n,d}\bm{h}) \geq \binom{n+1}{3} + 1$.
			\item [2.] There exist integers $0 \leq a < b \leq d$ such that $R(\bm{\gamma}_a) = R(\bm{\gamma}_b) = 1$. According to  \cref{lem: est gamma rank 2}, we obtain the same estimate as in the first case:
			\begin{align*}
				R(J_{n,d}\bm{h}) \geq 2 + \binom{n+1}{3} - 1 = \binom{n+1}{3} + 1.
			\end{align*}
		\end{itemize}

		\textbf{III-1}: $\bm{h}_d \geq 0$, $P(\bm{h}_d) \geq 3$, $R(\bm{\gamma}) \geq 3$.
		
		According to   \cref{thm: GHP}, we have $R(J_{n,d}\bm{h}) \geq 3 + 3(n - 1) - 3 \times 2/2 > 3n - 4$.
		
		\textbf{III-2}: $\bm{h}_d \geq 0$, $\bm{h}_d \neq 0$, $R(\bm{\gamma}) \leq 2$.
		
		\cref{lem: estimate on jnd for gamma rank 1} and \cref{lem: est gamma rank 2} do not impose any restrictions on $\bm{h}_d$. Similar to \textbf{II-2}, we have $R(J_{n,d}\bm{h}) \geq \binom{n+1}{3} + 1$.
		
		\textbf{III-3}: $\bm{h}_d \geq 0$, $\bm{h}_d \neq 0$, $P(\bm{h}_d) \leq 2$.
		
		In this case, $1 \leq P(\bm{h}_d) \leq 2$. We introduce a patch vector $\bm{\delta}$ associated with $\bm{h}_d$, which satisfies the following two conditions: (1) $\bm{\delta} \geq 0$, (2) $J_{n-1,d-1}\bm{\delta} \geq \bm{h}_d$. Clearly, such a patch vector always exists for any $\bm{h}_d$. We are interested in the patch with the minimal rank.
		
		If there exists a rank-1 patch $\bm{\delta}$ for $\bm{h}_d$ such that $R(\bm{\delta}) = 1$, by \cref{thm: GHP}, then $R(J_{n-1,d-1}(\bm{\delta}))\leq n-1$. Thus there must exist $\lambda > 0$ such that $\lambda\bm{\delta}$ is also a patch vector for $\bm{h}_d$ and $P(J_{n-1,d-1}(\lambda\bm{\delta}) - \bm{h}_d) \leq n - 2$. Define $\bm{h}' = (\bm{h}_0, \cdots, \bm{h}_{d-2}, \bm{h}_{d-1} + \lambda\bm{\delta})$. By the definition of the patch vector, we have
		$$J_{n-1,d-2}\bm{h}_{d-2} + \bm{h}_{d-1} + \lambda\bm{\delta}, \quad J_{n-1,d-1}(\bm{h}_{d-1} + \lambda\bm{\delta}) \geq 0,$$
		so $J_{n,d-1}\bm{h}' \geq 0$. At this point,
		\begin{align*}
			R(J_{n,d-1}\bm{h}') = & \sum_{i=0}^{d-2}R(\bm{\gamma}_i) + R(J_{n-1,d-2}\bm{h}_{d-2} + \bm{h}_{d-1} + \lambda\bm{\delta}) + R(J_{n-1,d-1}(\bm{h}_{d-1} + \lambda\bm{\delta})) \\
			\leq & \sum_{i=0}^{d-2}R(\bm{\gamma}_i) + P(J_{n-1,d-2}\bm{h}_{d-2} + \bm{h}_{d-1}) + 1 \\
			+ & P(J_{n-1,d-1}\bm{h}_{d-1} + \bm h_d) + P(\lambda J_{n-1,d-1}\bm{\delta} - \bm{h}_d) \\
			\leq & R(\bm{\gamma}) + 1 + n - 2 \leq R(\bm{\gamma}) + R(J_{n-1,d}\bm{h}_d) = R(J_{n,d}\bm{h}),
		\end{align*}
		where the third inequality follows from \cref{thm: Gao-Ng}.
		Since $\sum_{i=0}^{d-1}N(\bm{h}_i) \geq 2$, we have $N(\bm{h}') = \sum_{i=0}^{d-2}N(\bm{h}_i) + N(\bm{h}_{d-1} + \lambda\bm{\delta}) \geq 1$. Therefore, when a rank-1 patch exists,
		$$R(J_{n,d}\bm{h}) \geq R(J_{n,d-1}\bm{h}') \geq R_{n,d-1}.$$
		
		We claim that a rank-1 patch exists when $P(\bm{h}_d) = 1$ or $P(\bm{h}_d) = 2$ and $R(J_{n-1,d}\bm{h}_d) = 2n - 3$. This can be demonstrated within the framework of polynomial multiplication. 
		
		When $P(\bm{h}_d) = 1$, it is obvious. 
		
		When $P(\bm{h}_d) = 2$, to construct the patch vector $\bm{\delta}$, note that there exists a homogeneous polynomial of degree $d$ in $(n-1)$-variables, $A(x_2, \cdots, x_n) = \mathfrak{X}_d\bm{h}_d = a x^\alpha + b x^\beta$, whose coordinates under the left lexicographic order are exactly $\bm{h}_d$, where $a$ and $b$ are positive numbers. Since $R(J_{n-1,d}\bm{h}_d) = 2n - 3$, there exist $2 \leq i, j \leq n$ such that $x^i x^\alpha = x^j x^\beta$. We can take $\bm{\delta}$ to be the coordinate vector corresponding to the $(d-1)$-degree polynomial $(a + b)x^\alpha/x^j$. This completes the proof of the claim.
		
		According to Theorem \cref{thm: GHP}, when $P(\bm{h}_d) = 2$, we have $2n - 3 \leq R(J_{n-1,d}\bm{h}_d) \leq 2n - 2$. Therefore, $R(J_{n-1,d}\bm{h}_d) = 2n - 2$ is the only remaining possibility for \textbf{III-3}. In this case, $\bm{h}_d$ can be written as the sum of two non-negative rank-1 vectors. We use two rank-1 patch vectors $\bm{\delta}_1$ and $\bm{\delta}_2$ to patch these two non-negative rank-1 vectors, respectively. Let $\bm{\delta} = \bm{\delta}_1 + \bm{\delta}_2$. By fine-tuning $\bm{\delta}_1$ and $\bm{\delta}_2$, we ensure that $P(J_{n-1,d-1}\bm{\delta} - \bm{h}_d) \leq 2n - 4$. Still defining $\bm{h}' = (\bm{h}_0, \cdots, \bm{h}_{d-2}, \bm{h}_{d-1} + \bm{\delta})$, similar to the argument for the rank-1 patch, we have
		$$
		R(J_{n,d-1}\bm{h}') \leq R(\bm{\gamma}) + 2 + 2n - 4 = R(\bm{\gamma}) + R(J_{n-1,d}\bm{h}_d) = R(J_{n,d}\bm{h}).
		$$
		If $N(\bm{h}') = \sum_{i=0}^{d-2}N(\bm{h}_i) + N(\bm{h}_{d-1} + \bm{\delta}) \geq 1$, then by definition, we have $R(J_{n,d}\bm{h}) \geq R_{n,d-1}$.
		
		Otherwise, $\sum_{i=0}^{d-2}N(\bm{h}_i) + N(\bm{h}_{d-1} + \bm{\delta}) = 0$, which implies $\sum_{i=0}^{d-2}N(\bm{h}_i) + N(\bm{h}_d) = 0$ and $N(\bm{h}_{d-1}) = 2$. Repeating the argument from Proposition \ref{prop4-1}, we get
		\begin{align*}
			R(J_{n,d}\bm{h}) =& R(\bm{h}_0) + \sum_{i=1}^d R(J_{n-1,i-1}\bm{h}_{i-1} + \bm{h}_i) + R(J_{n-1,d}\bm{h}_d) \\
			\geq & R(\bm{h}_{d-2}) + R(J_{n-1,d-2}\bm{h}_{d-2} + \bm{h}_{d-1}) + R(J_{n-1,d}\bm{h}_d) \\
			\geq & 1 + (n - 1 - 2) + 2(n - 1) = 3n - 4.
		\end{align*}
		Note that  $n \geq 2$, $\binom{n+1}{3} + 1 \geq 3n - 4$. Combining all six subcases, we obtain
		\begin{align}\label{equ: six cases}
			R_{n,d} \geq \min\{R_{n,d-1}, 3 + R_{n-1,d}, 3n - 4\}.
		\end{align}
		In the following, we will complete the proof of \cref{thm: main 1} (A) by induction. The induction idea is illustrated as shown in the following picture.
		$$
		\begin{tikzpicture}[
			scale=0.8,
			point/.style={circle,fill,inner sep=1.5pt},
			boldpoint/.style={circle,fill,inner sep=2.5pt,draw=black,line width=0.8pt},
			axisarrow/.style={-{Stealth[scale=1.2]},thick},
			gridline/.style={gray!30,thin}
			]
			
			\draw[axisarrow] (0,0) -- (5.5,0) node[right] {$n$};
			\draw[axisarrow] (0,0) -- (0,5.5) node[above] {$d$};
			
			\foreach \x in {1,2,3,4,5}
			\node[anchor=north] at (\x,0) {\x};
			\foreach \y in {1,2,3,4,5}
			\node[anchor=east] at (0,\y) {\y};
			
			\foreach \x in {1,2,3,4,5}
			\draw[gridline] (\x,0) -- (\x,5);
			\foreach \y in {1,2,3,4,5}
			\draw[gridline] (0,\y) -- (5,\y);
			
			\draw[line width=1.2pt,red!70] (2,0) -- (2,5); 
			\draw[line width=1.2pt,blue!70] (0,2) -- (5,2); 
			
			\foreach \x in {1,2,3,4,5}
			\foreach \y in {1,2,3,4,5}
			\node[point] at (\x,\y) {};
			
			\foreach \y in {2,3,4,5}
			\node[boldpoint,red!80] at (2,\y) {};
			\foreach \x in {2,3,4,5}
			\node[boldpoint,blue!80] at (\x,2) {};
			
			\node[boldpoint,fill=purple!80] at (2,2) {};
			
			\draw[-{Stealth[scale=1.3]},line width=1pt,green!70!black] (3,2) -- (3,3); 
			\draw[-{Stealth[scale=1.3]},line width=1pt,green!70!black] (2,3) -- (3,3); 
			\draw[-{Stealth[scale=1.3]},line width=1pt,green!70!black] (3,3) -- (4,3); 
			\draw[-{Stealth[scale=1.3]},line width=1pt,green!70!black] (3,3) -- (3,4); 
			\draw[-{Stealth[scale=1.3]},line width=1pt,green!70!black] (2,4) -- (3,4); 
			\draw[-{Stealth[scale=1.3]},line width=1pt,green!70!black] (4,2) -- (4,3); 
			
			\node[below left] at (0,0) {$0$};
			\node[above left,red!80] at (2,5) {$n=2$};
			\node[right,blue!80] at (5,2) {$d=2$};
		\end{tikzpicture}
		$$
		
		In the figure, we want to show the bold lattice points $(n, d)$ satisfy $R_{n,d} \geq 3n - 4$, for $n,d\geq 2$. 
		
		For   $n = 2$, as previously discussed, \cref{thm: Gao-Ng} gives $R_{2,d} \geq 2 = 3 \times 2  - 4$. For $d = 2$, we have proved   that  
		$R_{3,2} \geq 5$, $R_{4,2} \geq 8$, $R_{5,2} \geq 11$, which satisfies $R_{n,2}\geq 3n-4$, and \cref{thm: main 1} (B) shows $R_{n,2}\geq 3n-4$ for $n\geq 6$.
		
		Assuming $R_{n-1,d} \geq 3n - 7$ and $R_{n,d-1} \geq 3n - 4$, for $n,d\geq 3$. As illustrated by the arrows in the picture, we only need to prove $R_{n,d}\geq 3n-4$.
		
		From (\ref{equ: six cases}), we have 
		\begin{align*}
			R_{n,d} &\geq \min\{R_{n,d-1}, 3 + R_{n-1,d}, 3n - 4\}\\
			&\geq \min\{3n - 4, 3 + 3n - 7, 3n - 4\} = 3n - 4.
		\end{align*}
		\textbf{This completes the proof of \cref{thm: main 1} (A).}
		
		
		\section{Proof of \cref{Eben's conj low dim}}
		
		In this section,  we proceed to prove the \cref{Eben's conj low dim}. Since any  diagonal Hermitian polynomial \( A(z, \bar{z}) \) of degree at most \( 2d \),  as well as its first prolongation, can be written as a sum of diagonal Hermitian bihomogeneous polynomials:
		\[
		A(z, \bar{z})\|z\|^2 = \sum_{0 \leq i \leq d} A_i(z, \bar{z})\|z\|^2, \ A_i \in P_{n,i}.
		\]  
		Thus \( A_i(z, \bar{z})\|z\|^2 \in \mbox{SOS}_n \)  if and only if \( A_i(z, \bar{z})\|z\|^2 \in \Sigma_{n,i+1} \) for \( 0 \leq i \leq d \), and  
		\[
		R(A(z, \bar{z})\|z\|^2) = \sum_{0 \leq i \leq d} R(A_i(z, \bar{z})\|z\|^2) = \sum_{0 \leq i \leq d} R(J_{n,i}\bm{a}_i),
		\]  
		where \( \bm{a}_i \) denotes the coordinate vector of the diagonal polynomial \( A_i (z, \bar{z})\) under the basis \( \mathfrak{X}_i \) as previously described.  
		
		By the second inequality of \cref{lem: counting ineq}, if the coordinate vector corresponding to \( A_1(z, \bar{z}) \) has negative components, then \( A_1(z, \bar{z})\|z\|^2 \notin \Sigma_{n,2} \), and the same holds for \( A_0(z, \bar{z})\|z\|^2 \). Thus when \( A(z, \bar{z}) \notin \mbox{SOS}_n\) and \(A(z, \bar{z})||z||^2 \in \mbox{SOS}_n \) , there must exist an integer \( 2 \leq k \leq d \) such that \( A_k(z, \bar{z}) \notin \Sigma_{n,k} , A_k(z, \bar{z})\|z\|^2 \in \Sigma_{n,k+1} \). By \cref{thm: main 1}(A) 
		\[
		R(A(z, \bar{z})\|z\|^2) \geq R(A_k(z, \bar{z})\|z\|^2) \geq R_{n,k} \geq 3n - 4.
		\]  
		
		Let \( k_i := R(\bm{a}_i) \). When \( A(z, \bar{z}) \in \mbox{SOS}_n \) , by \cref{thm: GHP} either
		\[
		n\sum_{i=0}^d k_i - \frac{\left(\sum_{i=0}^d k_i\right)\left(\sum_{i=0}^d k_i - 1\right)}{2} \leq n\sum_{i=0}^d k_i - \sum_{i=0}^d \frac{k_i(k_i - 1)}{2} \leq R(A(z, \bar{z})\|z\|^2) \leq n\sum_{i=0}^d k_i,
		\]  
		or
		\[R(A(z, \bar{z})\|z\|^2) \geq \frac{n(n+1)}{2}. \]
		By direct computation, when \( 2 \leq n \leq 6 \), regardless of whether \( A(z, \bar{z}) \) is positive semi-definite, if the polynomial \( A(z, \bar{z}) \) becomes positive semi-definite after the first extension, the rank \( R \) of \( A(z, \bar{z})\|z\|^2 \) either satisfies  
		\[
		R \geq (\kappa_0 + 1)n - \frac{(\kappa_0 + 1)\kappa_0}{2} - 1,
		\]  
		where \( \kappa_0 \) is the largest integer such that \( \kappa(\kappa + 1)/2 < n \), or there exists \( \kappa \in \{0, 1, 2, \cdots, \kappa_0\} \) such that  
		\[
		\kappa n - \frac{\kappa(\kappa - 1)}{2} \leq R \leq \kappa n.
		\]


\begin{thebibliography}{99}
			\bibitem{B1} J.  Brooks and D. Grundmeier. Sum of squares conjecture: the monomial case in $\C^3$. Math.
			Z. 299, 919–40, 2021.
			
			\bibitem{D1} J. D'Angelo, 
			Inequalities from complex analysis.
			Carus Mathematical Monographs,
			28, 2002.	
			
			\bibitem{E1}P. Ebenfelt,  On the HJY Gap Conjecture in CR Geometry vs. the SOS Conjecture for
			Polynomials. In Analysis and Geometry in Several Complex Variables,  681, 125–35.
			Providence, RI: American Mathematical Society, 2017.
			
			\bibitem{H1}X. Huang, On a linearity problem for proper holomorphic maps between balls in complex
			spaces of different dimensions. J. Differential Geom., 51, 13–33, 1999.
			
			\bibitem{G1}Y. Gao  and S. Ng,
			On the rank of {H}ermitian polynomials and the SOS
			conjecture,
			International Mathematics Research Notices. IMRN,
			13, 11276--11290,
			2023.
			
			\bibitem{GY}Y. Gao and S. Ng,
			A hyperplane restriction theorem for holomorphic mappings and its application for the gap conjecture.
			Math. Ann. 388 (2024), no. 3, 3169–3182.
			
			
			\bibitem{G2}M. Green,  Restrictions of linear series to hyperplanes, and some results of Macaulay and Gotzmann.
			Algebraic Curves and Projective Geometry Trento Lecture Notes in Math., 1389, 76–86 , 1988.
			
			\bibitem{G3}
			D.	Grundmeier, and J. Halfpap Kacmarcik,
			An application of {M}acaulay's estimate to sums of squares
			problems in several complex variables.
			Proc. Amer. Math. Soc.,
			143, 1411--1422,
			2015.
			
			\bibitem{H1}
			J. Halfpap  and J. Lebl, 
			Signature pairs of positive polynomials.
			Bull. Inst. Math. Acad. Sin.,     	
			8, 169-192,  
			2013.
			
			\bibitem{HJY}
			X. Huang, S. Ji and W. Yin, Recent progress on two problems in several complex variables, Proc. Int. Congress Chi. Math. I (2007):563--75.
			
			
			
			\bibitem{M1} F. S. Macaulay, Some properties of enumeration in the theory of modular systems. Proc. Lond. Math.
			Soc. 26, 531–555 , 1927.
			
			\bibitem{Q1} 
			D. Quillen, 
			On the representation of hermitian forms as sums of squares.
			Invent. Math.,\ 
			5:237-242,\ 
			1968.
			
		\end{thebibliography}
	\end{document}